\theoremstyle{plain}
\newtheorem{theorem}{Theorem}[section]
\newtheorem{proposition}[theorem]{Proposition}
\newtheorem{corollary}[theorem]{Corollary}
\newtheorem{lemma}[theorem]{Lemma}
\theoremstyle{definition}
\newtheorem{remark}[theorem]{Remark}
\newtheorem{example}[theorem]{Example}
\newcommand{\abs}[1]{\lvert#1\rvert}
\newcommand{\norm}[1]{\lVert#1\rVert}
\newcommand{\term}[1]{{\textit{\textbf{#1}}}}
\title[Unbounded order convergence]{Unbounded Order Convergence in Dual Spaces}
\date{}
\author[N.~Gao]{Niushan Gao}
\address{Department of Mathematical and Statistical Sciences, University of Alberta, Edmonton, AB, Canada T6G\,2G1}
\email{niushan@ualberta.ca}
\keywords{Unbounded order convergence, Weak star convergence, Abstract martingales, Atomic Banach lattices, Positive Grothendick property, Dual positive Schur property}
\subjclass[2010]{Primary: 06F30. Secondary: 60G42, 60G48.}
\begin{document}
\maketitle
\begin{abstract}A net $(x_\alpha)$ in a vector lattice $X$ is said to be {unbounded order convergent} (or uo-convergent, for short) to $x\in X$
if the net $(\abs{x_\alpha-x}\wedge y)$ converges to $0$ in order for all $y\in X_+$. In this paper, we study unbounded order convergence in dual spaces of Banach lattices. Let $X$ be a Banach lattice. We prove that every norm bounded uo-convergent net in $X^*$ is $w^*$-convergent iff $X$ has order continuous norm, and that every $w^*$-convergent net in $X^*$ is uo-convergent iff $X$ is atomic with order continuous norm. We also characterize among $\sigma$-order complete Banach lattices the spaces in whose dual space every simultaneously uo- and $w^*$-convergent sequence converges weakly/in norm.
\end{abstract}

\section{Introduction}
This paper is a continuation of \cite{gx}. We follow the terminology and notations from \cite{gx}. Recall that a net $(x_\alpha)$ in a vector lattice $X$ is \term{unbounded order convergent} (or uo-convergent, for short) to $x\in X$ if $\abs{x_\alpha-x}\wedge y\xrightarrow{o}0$ for all $y\in X_+$. In this case, we write $x_\alpha\xrightarrow{uo}x$. It is easily seen that a sequence $(x_n)$ in $L_1(\mu)$ uo-converges to $x\in L_1(\mu)$ iff $(x_n)$ converges to $x$ almost everywhere. Let $\mathbb{R}^A$ be the vector lattice of all real-valued functions on a non-empty set $A$, equipped with the pointwise order. It is easily seen that
a net $(x_\alpha)$ in $\mathbb{R}^A$ uo-converges to $x\in \mathbb{R}^A$ iff it converges pointwise to $x$.

The study of uo-convergence was initiated in \cite{nak,dem64}. In \cite{wi77}, Wickstead initiated the study of relations between uo-convergence and topological properties of the underlying spaces. He characterized the spaces in which uo-convergence of nets implies weak convergence and vice versa. In \cite{gx}, Xanthos and the author studied nets which simultaneously have weak and uo convergence properties, and characterized among $\sigma$-order complete Banach lattices the spaces in which every weakly and uo-convergent sequence is norm convergent.\par

In this paper, we study uo-convergence in dual spaces. Let $X$ be a Banach lattice. We are motivated by \cite{wi77} and \cite{gx} to consider the following:
\begin{enumerate}
\item characterize the spaces $X$ such that in its dual space $X^*$, uo-convergence implies $w^*$-convergence and vice versa
\item study nets/sequences in $X^*$ which simultaneously have uo- and $w^*$-convergence properties, and characterize the spaces in whose dual space simultaneous uo- and $w^*$-convergence imply weak/norm convergence.
\end{enumerate}
\medskip
\hspace*{\parindent}The remark at the end of this paper suggests the notion of uo-convergence in dual spaces could serve as a tool for the study of geometry of Banach lattices. We first remark here a few useful facts about uo-convergence.
\begin{lemma}\label{sim}Let $X$ be a $\sigma$-order complete vector lattice and $(x_n)$ a disjoint sequence in $X$. Then $(x_n)$ uo-converges to $0$ in $X$.
\end{lemma}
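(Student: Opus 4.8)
The plan is to reduce unbounded order convergence to a statement about a single positive element and then exploit $\sigma$-order completeness to build an explicit decreasing majorant. Fix $y\in X_+$; by the definition of uo-convergence it suffices to show $\abs{x_n}\wedge y\xrightarrow{o}0$. I would write $u_n:=\abs{x_n}\wedge y$ and record two facts: first, $0\le u_n\le y$ for all $n$, so the $u_n$ are order bounded; second, the $u_n$ inherit disjointness from $(x_n)$, since $u_n\wedge u_m\le\abs{x_n}\wedge\abs{x_m}=0$ whenever $n\neq m$.

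Next I would use $\sigma$-order completeness to define $v_k:=\sup_{n\ge k}u_n$, which exists because $\{u_n:n\ge k\}$ is a countable set bounded above by $y$. The sequence $(v_k)$ is decreasing and bounded below by $0$, so $v:=\inf_k v_k$ also exists. The heart of the argument is to prove that $v=0$. For this I would invoke the infinite distributive law, which holds in any vector lattice whenever the relevant supremum exists: for each fixed $m$,
\[
u_m\wedge v\le u_m\wedge v_{m+1}=u_m\wedge\sup_{n\ge m+1}u_n=\sup_{n\ge m+1}(u_m\wedge u_n)=0,
\]
where the last equality uses disjointness. Thus $v$ is disjoint from every $u_m$, and applying the distributive law once more gives $v=v\wedge v_1=v\wedge\sup_n u_n=\sup_n(v\wedge u_n)=0$.

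Having shown $v_k\downarrow 0$, I would finish by observing that $0\le u_n\le v_n$ with $v_n\downarrow 0$, which is exactly the condition $u_n\xrightarrow{o}0$ from the definition of order convergence. Since $y\in X_+$ was arbitrary, this yields $x_n\xrightarrow{uo}0$.

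The step I expect to be the main obstacle is the verification that $v=0$: one must apply the infinite distributive law correctly and keep careful track of which suprema and infima are actually guaranteed to exist under mere $\sigma$-order completeness (here ensured by the order bound $y$ and countability). Once the decreasing majorant $(v_k)$ is in hand, the remainder is routine bookkeeping.
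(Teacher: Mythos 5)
Your proof is correct and takes essentially the same approach as the paper's: form the decreasing tails $v_k=\sup_{n\ge k}(\abs{x_n}\wedge y)$, which exist by $\sigma$-order completeness, and use disjointness together with two applications of the infinite distributive law to show their infimum is zero. The only cosmetic difference is that you work with the infimum $v$ itself, whereas the paper argues with an arbitrary positive lower bound of the tails and shows it must vanish.
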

\begin{proof}Fix $x\in X_+$. We claim that $\sup_{k\geq n}(\abs{x_k}\wedge x)\downarrow_n 0$. Indeed, suppose $\sup_{k\geq n}(\abs{x_k}\wedge x)\geq y\geq 0$ for all $n\geq 1$. Then
\begin{align*}
0\leq y\wedge \abs{x_n}\leq &\left(\sup_{k\geq n+1}(\abs{x_k}\wedge x)\right)\wedge \abs{x_{n}}=\sup_{k\geq n+1}(\abs{x_k}\wedge \abs{x_n}\wedge y)=0.
\end{align*} Thus, $y\wedge \abs{x_n}=0$ for all $n\geq1$. It follows that $y=y\wedge \sup_{n\geq 1}(\abs{x_n}\wedge x)=\sup_{n\geq 1}(y\wedge \abs{x_n}\wedge x)=0$.
This proves the claim. Now it is immediate that $\abs{x_n}\wedge x\xrightarrow{o}0$.
\end{proof}

Recall that a vector $x>0$ in a vector lattice $X$ is called an {atom} if the ideal generated by $x$ is one-dimensional, and that a vector lattice $X$ is said to be atomic if the linear span of all atoms is order dense in $X$.
\begin{lemma}\label{sim2}\begin{enumerate}
\item\label{sim2i1} For a sequence $(x_n)$ in a vector lattice $X$, if $x_n\xrightarrow{uo}0$, then $\inf_k\abs{x_{n_k}}=0$ for any increasing sequence $(n_k)$ of natural numbers.
\item\label{sim2i2} The converse holds true if $X$ is an order complete atomic vector lattice.
\end{enumerate}
\end{lemma}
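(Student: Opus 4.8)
The plan is to prove (\ref{sim2i1}) directly from the definition of uo-convergence, and to prove (\ref{sim2i2}) by contraposition, using an atom to manufacture a positive common lower bound along a subsequence.

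For (\ref{sim2i1}), fix an increasing sequence $(n_k)$ and let $w$ be any lower bound of $\{\abs{x_{n_k}}:k\ge1\}$. Replacing $w$ by $w^+$ (still a lower bound, since each $\abs{x_{n_k}}$ majorizes both $w$ and $0$), I may assume $0\le w\le\abs{x_{n_k}}$ for all $k$, so that $\abs{x_{n_k}}\wedge w=w$. Applying the hypothesis $x_n\xrightarrow{uo}0$ with the test vector $y=w$, the sequence $(\abs{x_n}\wedge w)$ order-converges to $0$ and is therefore eventually dominated by the terms of a net decreasing to $0$. Evaluating this domination along the tail of $(n_k)$ and using $\abs{x_{n_k}}\wedge w=w$, I obtain $w\le p$ for every member $p$ of that decreasing net, whence $w\le 0$ and so $w=0$. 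Thus $0$ is the greatest lower bound, i.e. $\inf_k\abs{x_{n_k}}=0$.

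For (\ref{sim2i2}) I argue the contrapositive: assuming $x_n\not\xrightarrow{uo}0$, I produce an increasing sequence $(n_k)$ with $\inf_k\abs{x_{n_k}}>0$. There is $y\in X_+$ with $u_n:=\abs{x_n}\wedge y$ not order-null. Since $X$ is order complete and atomic in both cases (order completeness is given in the first case and is automatic in the second, as an order-continuous Banach lattice is Dedekind complete), the order-limsup $v:=\inf_N\sup_{n\ge N}u_n$ exists and $0\le v\le y$; were $v=0$, the decreasing sequence $r_N:=\sup_{n\ge N}u_n\downarrow 0$ would witness $u_n\xrightarrow{o}0$, a contradiction, so $v>0$. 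Here atomicity enters: order density of the atoms yields an atom $a_0$ and $\lambda>0$ with $0<\lambda a_0\le v$. To convert this into a uniform lower bound, let $P$ be the band projection onto the one-dimensional band $\mathbb{R}a_0$ (a projection band by order completeness) and write $Pu=\phi(u)a_0$ with $\phi$ a positive, order-continuous functional. Applying the order-continuous $P$ to the inequality $\lambda a_0\le\sup_{n\ge N}u_n$, valid for every $N$, and using $Pa_0=a_0$ gives $\lambda\le\sup_{n\ge N}\phi(u_n)$ for all $N$, i.e. $\limsup_n\phi(u_n)\ge\lambda$. I may then choose $n_1<n_2<\cdots$ with $\phi(u_{n_k})\ge\lambda/2$, so that $\abs{x_{n_k}}\ge u_{n_k}\ge Pu_{n_k}=\phi(u_{n_k})a_0\ge(\lambda/2)a_0>0$ for every $k$; hence $(\lambda/2)a_0$ is a positive lower bound of $\{\abs{x_{n_k}}\}$, contradicting $\inf_k\abs{x_{n_k}}=0$.

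I expect the crux to be precisely the passage from the order inequality $\lambda a_0\le v$ to a subsequence bounded below by a fixed positive vector. This step fuses the two standing hypotheses: order completeness is needed to build the order-limsup $v$ and the band projection $P$, while atomicity is needed to secure $0<\lambda a_0\le v$, and it is the only place where atomicity is genuinely used. Everything else (the domination argument in (\ref{sim2i1}), the order continuity of $P$, and the extraction of the subsequence from $\limsup_n\phi(u_n)\ge\lambda$) is routine once this key vector $a_0$ is in hand.
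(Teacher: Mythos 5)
Your proof of (\ref{sim2i1}) is correct and essentially the paper's own argument (the paper likewise reduces to a positive lower bound $x$ and observes $x=\abs{x_{n_k}}\wedge x\xrightarrow{o}0$). The genuine gap is in (\ref{sim2i2}): you assert that $X$ is ``order complete and atomic in both cases,'' but your parenthetical justifies only order completeness. Atomicity is emphatically \emph{not} automatic for order continuous Banach lattices: $L_1[0,1]$ is order continuous and has no atoms whatsoever. Since the atom $a_0$ with $0<\lambda a_0\le v$ is, by your own account, the crux of the whole argument, your proof establishes the converse only for order complete atomic vector lattices and says nothing about the second alternative in the statement. For the atomic case, your argument is correct and is a genuinely different route from the paper's: the paper embeds $X$ as an ideal into $\mathbb{R}^A$ (Schaefer) and extracts a coordinate $a$ along which $(x_n(a))\not\to0$, whereas you stay inside $X$, form the order limsup $v$, and use the band projection $P$ onto $\mathbb{R}a_0$; morally your functional $\phi$ is the paper's evaluation at the coordinate $a$, but your version avoids the representation theorem.

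Here is the twist you could not have known: the case your proof fails to cover is in fact false, and the paper's own proof of it is flawed. In the order continuous case the paper reduces to $\widetilde{X}=L_1(\mu)$ and claims that if $(x_n)$ does not converge to $0$ almost everywhere, then some increasing sequence $(n_k)$ satisfies $\inf_k\abs{x_{n_k}}>0$. The typewriter sequence refutes this step: in $L_1[0,1]$ let $x_{2^k+j}=\chi_{[j2^{-k},(j+1)2^{-k}]}$ for $k\ge0$ and $0\le j<2^k$. This sequence converges at no point of $[0,1]$, so it is not uo-null (uo-convergence in $L_1$ being a.e.\ convergence, as the paper itself notes); yet for \emph{every} increasing $(n_k)$ the supporting intervals have lengths tending to $0$, so their intersection is a null set and $\inf_k\abs{x_{n_k}}=\chi_{\bigcap_k I_{n_k}}=0$ in $L_1[0,1]$. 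Thus every subsequence has infimum zero while the sequence fails to uo-converge, so the converse in (\ref{sim2i2}) genuinely requires atomicity. Your closing remark that atomicity is ``the only place where it is genuinely used'' is exactly right --- it is also the only setting in which the statement is true; your error was in believing that the order continuous case reduces to the atomic one.
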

\begin{proof}\eqref{sim2i1} Suppose $x_n\xrightarrow{uo}0$. Fix any increasing sequence $(n_k)$ of natural numbers. It is clear that $x_{n_k}\xrightarrow{uo}0$. Let $x\in X$ be such that $\abs{x_{n_k}}\geq x\geq 0$ for all $k\geq 1$. Then $x=\abs{x_{n_k}}\wedge x\xrightarrow{o}0$, implying that $x=0$. Hence, $\inf_k\abs{x_{n_k}}=0$.\par

\eqref{sim2i2} Suppose $x_n\not\xrightarrow{uo}0$ in $X$. Assume first that $X$ is an order complete atomic vector lattice. Then $X$ embeds as an ideal into $\mathbb{R}^A$ for some non-empty set $A$; cf.~\cite[Exercise~7, p.~143]{scha3}. It follows from \cite[Lemma~3.4]{gx} that $x_n\not\xrightarrow{uo}0$ in $\mathbb{R}^A$. Therefore, $(x_n(a))$ does not converge to $0$ for some point $a\in A$. Take an increasing sequence $(n_k)$ such that $\inf_k\abs{x_{n_k}(a)}>0$. Then $\inf_k\abs{x_{n_k}}>0$ in $\mathbb{R}^A$, and thus, in $X$.
\end{proof}

\section{When does uo-convergence imply $w^*$-convergence?}
Observe that the spaces in whose dual space every uo-convergent sequence is $w^*$-convergent are finite dimensional. Indeed, otherwise, assume $\dim X=\infty$. Then we can take a normalized disjoint sequence $(x_n^*)$ in $X^*$. By Lemma~\ref{sim}, $(nx_n^*)$ uo-converges to $0$ in $X^*$. Thus, $(nx_n^*) $ is $w^*$-convergent, by hypothesis. In particular, $(nx_n^*) $ is norm bounded, which is absurd. So the ``correct'' question one may ask is when uo-convergence of norm bounded nets/sequences implies $w^*$-convergence. The theorem below answers this question.

\begin{theorem}\label{u2w}Let $X$ be a Banach lattice. The following are equivalent:
\begin{enumerate}
\item\label{u2wi1} $X$ has order continuous norm,
\item\label{u2wi2} for any norm bounded net $(x_\alpha^*)$ in $X^*$, if $x_\alpha^*\xrightarrow{uo}0$, then $x_\alpha^*\xrightarrow{w^*}0$,
\item\label{u2wi2.5} for any norm bounded net $(x_\alpha^*)$ in $X^*$, if $x_\alpha^*\xrightarrow{uo}0$, then $x_\alpha^*\xrightarrow{\abs{\sigma}(X^*,X) }0$,
\item\label{u2wi3} for any norm bounded sequence $(x_n^*)$ in $X^*$, if $x_n^*\xrightarrow{uo}0$, then $x_n^*\xrightarrow{w^*} 0$,
\item\label{u2wi3.5} for any norm bounded sequence $(x_n^*)$ in $X^*$, if $x_n^*\xrightarrow{uo}0$, then $x_n^*\xrightarrow{\abs{\sigma}(X^*,X) }0$.
\end{enumerate}
\end{theorem}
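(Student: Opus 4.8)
The plan is to prove the cycle \eqref{u2wi1}$\Rightarrow$\eqref{u2wi2.5}$\Rightarrow$\eqref{u2wi2}$\Rightarrow$\eqref{u2wi3}$\Rightarrow$\eqref{u2wi1}, together with the trivial links \eqref{u2wi2.5}$\Rightarrow$\eqref{u2wi3.5}$\Rightarrow$\eqref{u2wi3}. The implications that cost nothing are those passing from the absolute weak$^*$ topology to the weak$^*$ topology (since $\abs{x^*(x)}\le\abs{x^*}(\abs x)$) and those restricting a statement about nets to sequences; these dispose of \eqref{u2wi2.5}$\Rightarrow$\eqref{u2wi2}, \eqref{u2wi2}$\Rightarrow$\eqref{u2wi3}, \eqref{u2wi2.5}$\Rightarrow$\eqref{u2wi3.5} and \eqref{u2wi3.5}$\Rightarrow$\eqref{u2wi3}. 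So the real content is the single positive implication \eqref{u2wi1}$\Rightarrow$\eqref{u2wi2.5} and the single negative implication \eqref{u2wi3}$\Rightarrow$\eqref{u2wi1}.

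For \eqref{u2wi3}$\Rightarrow$\eqref{u2wi1} I would argue contrapositively. If $X$ is not order continuous, Meyer--Nieberg's theorem yields an order bounded disjoint sequence $(x_n)$ in $X_+$, say $0\le x_n\le x$, with $\inf_n\norm{x_n}>0$; choosing norming positive functionals gives $\psi_n\in X^*_+$ with $\norm{\psi_n}\le1$ and $\psi_n(x_n)\ge\epsilon$. The key step is to turn $(\psi_n)$ into a disjoint sequence in $X^*$ without destroying its action on $x$: since the canonical embedding $X\hookrightarrow X^{**}$ is a lattice homomorphism, the functionals $\widehat{x_n}$ are pairwise disjoint and order continuous on $X^*$, so their carrier bands $C_n$ in the Dedekind complete lattice $X^*$ are pairwise disjoint; letting $f_n$ be the band projection of $\psi_n$ onto $C_n$ gives a disjoint sequence with $\norm{f_n}\le1$ and $f_n(x)\ge f_n(x_n)=\psi_n(x_n)\ge\epsilon$ (the last equality because $\psi_n-f_n$ is a positive element of the null ideal of $\widehat{x_n}$). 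By Lemma~\ref{sim} a disjoint sequence in $X^*$ is uo-null, while $f_n(x)\ge\epsilon$ shows it is not $w^*$-null; this contradicts \eqref{u2wi3}.

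For \eqref{u2wi1}$\Rightarrow$\eqref{u2wi2.5} I fix $x\in X_+$ and write $\xi=\widehat x$, which is a positive \emph{order continuous} functional on $X^*$ for any Banach lattice. Given a norm bounded net with $x^*_\alpha\xrightarrow{uo}0$, the definition of uo-convergence gives $\abs{x^*_\alpha}\wedge w\xrightarrow{o}0$ for each $w\in X^*_+$, whence $\xi(\abs{x^*_\alpha}\wedge w)\to0$ by order continuity of $\xi$. Thus it suffices to produce, for each $\epsilon>0$, a single $w\in X^*_+$ with $\sup_\alpha\xi\bigl((\abs{x^*_\alpha}-w)^+\bigr)\le\epsilon$, for then $\xi(\abs{x^*_\alpha})\le\xi(\abs{x^*_\alpha}\wedge w)+\xi((\abs{x^*_\alpha}-w)^+)$ forces $\limsup_\alpha\abs{x^*_\alpha}(x)\le\epsilon$. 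This ``uniform $\xi$-integrability'' is where order continuity of $X$ enters, through the following lemma, which I regard as the crux: \emph{if $X$ is order continuous then every norm bounded disjoint sequence in $X^*$ is $w^*$-null}.

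The lemma itself I would prove by contradiction: a bounded disjoint $(g_k)\subseteq X^*_+$ with $g_k(x)\ge\delta$ can, using the Riesz--Kantorovich formula for $g_j\wedge g_k=0$ together with a diagonal disjointification in the Dedekind complete space $X$, be ``realised'' by a disjoint sequence $(u_k)$ in the order interval $[0,x]$ with $g_k(u_k)\ge\delta/2$; but order continuity of $X$ forces $\norm{u_k}\to0$, so $g_k(u_k)\le\norm{g_k}\,\norm{u_k}\to0$, a contradiction. Granting the lemma, if $\abs{x^*_\alpha}(x)\not\to0$ I would run a gliding hump on a sequentially extracted tail to build a disjoint bounded sequence in $X^*$ still carrying $\xi$-mass $\ge\epsilon/2$, contradicting the lemma; this simultaneously yields the required $w$ and settles the net statement, since the offending sequence is extracted only at the moment of contradiction. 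The two places demanding care are the realisation of disjoint dual functionals inside the predual order interval and the gliding hump; everything else is the routine bookkeeping of the absolute weak$^*$ seminorms.
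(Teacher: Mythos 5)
Your proposal is correct, and at the skeleton level it matches the paper's: the same two implications carry all the content, and in both arguments the return from the sequential hypothesis to order continuity runs through Lemma~\ref{sim} applied to a norm bounded disjoint sequence in $X^*$. The genuine difference is that the paper settles both substantive implications by citation, while you reprove the cited facts from scratch. For \eqref{u2wi3}$\Rightarrow$\eqref{u2wi1} the paper simply invokes \cite[Corollary~2.4.3]{mn1991}; your argument --- norming positive functionals $\psi_n$ for a non-norm-null order bounded disjoint sequence $(x_n)\subset[0,x]$, projected onto the pairwise disjoint carrier bands of the $\widehat{x_n}$ (their disjointness being Nakano's theorem, \cite[Theorem~1.67]{ali}) so that $f_n(x)\geq f_n(x_n)=\psi_n(x_n)\geq\varepsilon$ --- is a correct, self-contained proof of precisely the direction of that corollary being used, and it relies only on a tool the paper already employs elsewhere (Nakano's theorem underlies Lemma~\ref{spf}). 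For \eqref{u2wi1}$\Rightarrow$\eqref{u2wi2.5} the paper invokes \cite[Theorem~4.18]{ali}, which hands over exactly your ``uniform $\xi$-integrability'': some $y^*\in X_+^*$ with $(\abs{x_\alpha^*}-y^*)^+(x)<\varepsilon$ for all $\alpha$; after that the proof is three lines, since $\xi(\abs{x_\alpha^*}\wedge y^*)\to0$ by uo-convergence. You instead reach a contradiction through a gliding hump plus your disjoint-sequence lemma; in effect you are reproving the cited theorem. So the paper's route buys brevity, while yours buys self-containedness and makes visible that the theorem is at bottom the classical duality between order continuity of $X$ and disjoint sequences in $X^*$.

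One caution: the two steps you flag as ``demanding care'' are the entire content of the theorem you are reproving, and neither is routine. (i) Exact disjointification of the extracted sequence $(\abs{x_{\alpha_k}^*})$ in $X^*$ cannot be achieved by subtracting large multiples of $\bigvee_{i<k}\abs{x_{\alpha_i}^*}$ alone, because $(a-mb)^+\wedge b\neq0$ in general; since the sequence is norm bounded but not order bounded, you need a surrogate order bound, e.g. $g_k=\bigl(\abs{x_{\alpha_k}^*}-4^k\sum_{i<k}\abs{x_{\alpha_i}^*}-2^{-k}h\bigr)^+$ with $h=\sum_i2^{-i}\abs{x_{\alpha_i}^*}$ (norm convergent), combined with an inductive choice of $\alpha_k$ making $\xi\bigl(\abs{x_{\alpha_k}^*}\wedge4^k(k-1)\bigvee_{i<k}\abs{x_{\alpha_i}^*}\bigr)$ small, so that disjointness holds and $\xi(g_k)\geq\varepsilon/2$ survives. (ii) Realising disjoint bounded $g_k\in X_+^*$ on disjoint vectors $u_k\in[0,x]$ needs, beyond Riesz--Kantorovich for each pair, countable infima (available since order continuity gives Dedekind completeness) and order continuity of the norm to pass from finite to countable suprema when estimating $g_k(x-u_k)$, followed by the same disjointification trick inside $[0,x]$. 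Both steps do go through, so your outline is sound, but as written they are named rather than executed; a complete write-up must carry them out or cite them, at which point it essentially reconstructs \cite[Theorem~4.18]{ali} and \cite[Corollary~2.4.3]{mn1991}.
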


\begin{proof}
Since $x_\alpha^*\xrightarrow{uo}0$ iff $\abs{x_\alpha^*}\xrightarrow{uo}0$, it is clear that \eqref{u2wi2}$\Leftrightarrow$\eqref{u2wi2.5} and \eqref{u2wi3}$\Leftrightarrow$\eqref{u2wi3.5}.\par

Let $X$ be order continuous. Suppose $(x_\alpha^*)\subset X^*$ is norm bounded and $x_\alpha^*\xrightarrow{uo}0$. Pick any $x\in X_+$. For any $\varepsilon>0$, we have, by \cite[Theorem~4.18]{ali}, there exists $y^*\in X_+^*$ such that $\abs{x^*_\alpha}(x)-\abs{x^*_\alpha}\wedge y^*(x)<\varepsilon$ for all $\alpha$.
Since $\abs{x^*_\alpha}\wedge y^*\xrightarrow{o}0$, we have $\abs{x^*_\alpha}\wedge y^*(x)\rightarrow 0$. Thus, $\limsup_\alpha\abs{x^*_\alpha}(x)\leq \varepsilon$. By arbitrariness of $\varepsilon$, we have $\lim_\alpha\abs{x^*_\alpha}(x)=0$.
This proves \eqref{u2wi1}$\Rightarrow$\eqref{u2wi2.5}.\par
The implication \eqref{u2wi2.5}$\Rightarrow$\eqref{u2wi3.5} is clear. We now prove \eqref{u2wi3.5}$\Rightarrow$\eqref{u2wi1}. Assume \eqref{u2wi3.5} holds. Take any norm bounded disjoint sequence $(x_n^*)$ in $X^*$. By Lemma~\ref{sim}, $x_n^*\xrightarrow{uo}0$. Thus, $x_n^*\xrightarrow{w^*}0$ by assumption. It follows from \cite[Corollary~2.4.3]{mn1991} that $X$ is order continuous.
\end{proof}

We apply this result to establish the following dual version of \cite[Theorem~4.3]{gx}. Recall that a net $(x_\alpha)$ in a vector lattice $X$ is \term{unbounded order Cauchy} (or uo-Cauchy, for short), if the net $(x_\alpha-x_{\alpha'})_{(\alpha,\alpha')}$ uo-converges to $0$ in $X$.

\begin{theorem}\label{wstar}
Let $X$ be an order continuous Banach lattice. Then any norm bounded uo-Cauchy net in $X^*$ converges in uo and $\abs{\sigma}(X^*,X)$ to the same limit.
\end{theorem}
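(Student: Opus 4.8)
The plan is to locate the limit as a weak$^*$-limit and then establish the two asserted convergences in turn. Let $(x_\alpha^*)$ be a norm bounded uo-Cauchy net in $X^*$, so that the doubly-indexed net $(x_\alpha^*-x_{\alpha'}^*)_{(\alpha,\alpha')}$ is norm bounded and uo-converges to $0$. Since $X$ is order continuous, Theorem~\ref{u2w} applied to this net gives $x_\alpha^*-x_{\alpha'}^*\xrightarrow{\abs{\sigma}(X^*,X)}0$; in particular $\abs{(x_\alpha^*-x_{\alpha'}^*)(x)}\le\abs{x_\alpha^*-x_{\alpha'}^*}(\abs{x})\to0$ for every $x\in X$. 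Hence $(x_\alpha^*(x))$ is a Cauchy net of scalars for each $x$, so $x^*(x):=\lim_\alpha x_\alpha^*(x)$ defines a linear functional with $\norm{x^*}\le\sup_\alpha\norm{x_\alpha^*}<\infty$; thus $x^*\in X^*$ and $x_\alpha^*\xrightarrow{w^*}x^*$.

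Next I would upgrade weak$^*$ to $\abs{\sigma}(X^*,X)$ convergence towards this $x^*$. Fix $x\in X_+$ and $\varepsilon>0$, and choose $\alpha_0$ with $\abs{x_\alpha^*-x_{\alpha'}^*}(x)<\varepsilon$ whenever $\alpha,\alpha'\ge\alpha_0$. For fixed $\alpha\ge\alpha_0$ the map $z^*\mapsto\abs{x_\alpha^*-z^*}(x)=\sup_{\abs{u}\le x}(x_\alpha^*-z^*)(u)$ is a supremum of $w^*$-continuous affine functions, hence $w^*$-lower semicontinuous; evaluating along $x_{\alpha'}^*\xrightarrow{w^*}x^*$ gives $\abs{x_\alpha^*-x^*}(x)\le\liminf_{\alpha'}\abs{x_\alpha^*-x_{\alpha'}^*}(x)\le\varepsilon$. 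As $\varepsilon$ and $x\in X_+$ were arbitrary, $x_\alpha^*\xrightarrow{\abs{\sigma}(X^*,X)}x^*$.

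The main work is the uo-convergence $x_\alpha^*\xrightarrow{uo}x^*$, and here the obstacle is exactly that the lattice operations are not $w^*$-continuous, so one cannot simply pass to the limit inside $\abs{\cdot}\wedge y^*$. I would bypass this using order-completeness of $X^*$ together with the subadditivity estimate
\[
\abs{x_\alpha^*-x^*}\wedge y^*\le\abs{x_\alpha^*-x_{\alpha'}^*}\wedge y^*+\abs{x_{\alpha'}^*-x^*}\wedge y^*,\qquad y^*\in X_+^*,
\]
coming from $\abs{a}\le\abs{a-b}+\abs{b}$ and $(u+v)\wedge y^*\le u\wedge y^*+v\wedge y^*$ for positive $u,v$. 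Fix $y^*\in X_+^*$ and set $e_\beta:=\sup_{\alpha,\alpha'\ge\beta}\bigl(\abs{x_\alpha^*-x_{\alpha'}^*}\wedge y^*\bigr)$; these suprema exist since the terms lie in $[0,y^*]$, and $e_\beta\downarrow0$ because the double net uo-converges to $0$ and the diagonal is cofinal. For $\alpha,\alpha'\ge\beta$ the displayed inequality gives $\abs{x_\alpha^*-x^*}\wedge y^*\le e_\beta+\abs{x_{\alpha'}^*-x^*}\wedge y^*$; taking the infimum over $\alpha'\ge\beta$ and using $\inf_{\alpha'\ge\beta}\bigl(\abs{x_{\alpha'}^*-x^*}\wedge y^*\bigr)=0$ yields $\abs{x_\alpha^*-x^*}\wedge y^*\le e_\beta$ for all $\alpha\ge\beta$. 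That this positive infimum vanishes is where the previous step is used: evaluated at any $x\in X_+$ it is dominated by $\abs{x_{\alpha'}^*-x^*}(x)\to0$, forcing it to be $0$. Hence $\sup_{\alpha\ge\beta}\bigl(\abs{x_\alpha^*-x^*}\wedge y^*\bigr)\le e_\beta\downarrow0$, i.e. $\abs{x_\alpha^*-x^*}\wedge y^*\xrightarrow{o}0$, and as $y^*$ was arbitrary, $x_\alpha^*\xrightarrow{uo}x^*$.

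I expect this last step to be the crux: eliminating the tail term $\abs{x_{\alpha'}^*-x^*}\wedge y^*$ via the vanishing infimum is precisely what converts the Cauchy information on the double net into genuine uo-convergence of the single net, and it cleanly reuses the $\abs{\sigma}(X^*,X)$-convergence already established. The remaining ingredients — the subadditivity of $\cdot\wedge y^*$, the $w^*$-compactness/order-completeness facts, and the cofinality of the diagonal — are routine and should be dispatched quickly.
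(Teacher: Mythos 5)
Your proof is correct, and it takes a genuinely different --- and more self-contained --- route than the paper's. The two arguments share only the first step (Theorem~\ref{u2w} applied to the difference net yields the $w^*$-limit $x^*$). From there the paper reduces to the case where $X$ has a weak unit, represents $X^*$ inside an AL-space $\widetilde{X^*}$ and invokes \cite[Theorem~4.3]{gx} for relatively weakly compact uo-Cauchy nets, and then assembles the general case from a maximal disjoint system via band projections $P_\delta^*$, the weak-unit case being exactly what supplies order convergence (in $\alpha'$) of the projected tails $P_\delta^*\big(\abs{x_{\alpha'}^*-x^*}\wedge y^*\big)$. You bypass the representation theory and the band decomposition entirely: you first upgrade $w^*$- to $\abs{\sigma}(X^*,X)$-convergence via the $w^*$-lower semicontinuity of $z^*\mapsto\abs{x_\alpha^*-z^*}(x)$ (Riesz--Kantorovich formula), and then run the triangle inequality $\abs{x_\alpha^*-x^*}\wedge y^*\le\abs{x_\alpha^*-x_{\alpha'}^*}\wedge y^*+\abs{x_{\alpha'}^*-x^*}\wedge y^*$ inside the order complete lattice $X^*$. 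The decisive observation is that you do not need the tail term to order-converge to $0$ (which is what costs the paper its band machinery); you only need its lattice infimum over $\alpha'$ to vanish, and since that infimum is a lower bound of a family of positive functionals, it is detected by evaluation at points of $X_+$, where the already-established $\abs{\sigma}(X^*,X)$-convergence kills it. Note also that the two proofs derive the two modes of convergence in opposite orders: the paper obtains $\abs{\sigma}(X^*,X)$-convergence from uo-convergence by citing Theorem~\ref{u2w} a second time, whereas you obtain uo-convergence from $\abs{\sigma}(X^*,X)$-convergence. What the paper's route displays is the connection with the weak-compactness results of \cite{gx}; what yours buys is a shorter argument needing only Theorem~\ref{u2w}, order completeness of dual Banach lattices, cofinality of the diagonal in the product index set, and the elementary inequality $(u+v)\wedge y\le u\wedge y+v\wedge y$ for $u,v,y\ge0$.
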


\begin{proof}Let $(x_\alpha^*)\subset X^*$ be norm bounded and uo-Cauchy in $X^*$. It follows from Theorem~\ref{u2w} that $(x_\alpha^*)$ is $w^*$-Cauchy. Since $(x_\alpha^*)$ is norm bounded, we have $x_\alpha^*\xrightarrow{w^*}x^*$ for some $x^*\in X^*$. By Theorem~\ref{u2w} again, it suffices to show that $x_\alpha^*\xrightarrow{uo}x^*$.\par

We first assume that $X$ has a weak unit $x_0>0$. Then $x_0$ acts a strictly positive order continuous functional on $X^*$. Let $\widetilde{X^*}$ be the AL-space constructed for the pair $(X^*,x_0)$; cf.~\cite[Subsection~2.2]{gx}. Then $\widetilde{X^*}^*$ is lattice isomorphic to $I_{x_0}$, the ideal generated by $x_0$ in $X^{**}$. Since $X$ is order continuous, it is an ideal in $X^{**}$. Therefore, $I_{x_0}$ is the same as the ideal generated by $x_0$ in $X$. Thus, since $\{x_\alpha^*:\alpha\})$ is relatively $w^*$-compact in $X^*$, it is relatively $w$-compact in $\widetilde{X^*}$. By \cite[Lemma~2.2]{gx}, $(x_\alpha^*)$ is also uo-Cauchy in $\widetilde{X^*}$. It follows from \cite[Theorem~4.3]{gx} that $(x^*_\alpha)$ converges uo and weakly to the same limit in $\widetilde{X^*}$. Since $(x_\alpha^*)$ converges weakly to $x^*$ in $\widetilde{X^*}$, we have $(x^*_\alpha)$ uo-converges to $x^*$ in $\widetilde{X^*}$, and therefore, in $X^*$ by \cite[Lemma~2.2]{gx} again. This proves the special case.\par

We now prove the general case. Let $\{y_{\gamma} :\gamma\in\Gamma\}$ be a maximal collection of pairwise disjoint elements of $X$. Let $\Delta$ be the collection of all finite subsets of $\Gamma$ directed by inclusion. For each $\delta=\{\gamma_1,\dots,\gamma_n\}\in\Delta$, the band $B_\delta$ generated by $\{y_{\gamma_i}\}_1^n$ is an order continuous Banach lattice with a weak unit $y_\delta=\sum_1^ny_{\gamma_i}$. Let $P_\delta$ be the band projection for $B_\delta$. Then $P_\delta x\rightarrow x$ for each $x\in X_+$. It follows that $P_\delta^*x^*\uparrow x^*$ for each $x^*\in X_+^*$.\par

Fix any $y^*\in X_+^*$. Since $(x_\alpha^*)$ is uo-Cauchy, there exists a net $(x_{\alpha,\alpha'}^*)\subset X_+^*$ such that $\abs{x_\alpha^*-x_{\alpha'}^* }\wedge y^*\leq x_{\alpha,\alpha'}^*\downarrow0$. Fix any $\delta$.
Observe that $(x_\alpha^*|_{B_\delta})$ is norm bounded and $w^*$-converges to $x^*|_{B_\delta}$ in $B^*_{\delta}$. Since $X=B_\delta\oplus B_\delta^d$, it is easily seen that $(x_\alpha^*|_{B_\delta})$ is also uo-Cauchy in $B^*_{\delta}$. Thus by the preceding paragraph, we have $(x_\alpha^*|_{B_\delta})$ uo-converges to $x^*|_{B_\delta}$ in $B_\delta^*$. It follows that $P_\delta^*\big(\abs{x_\alpha^*-x^* }\wedge y^*\big)=\abs{P_\delta^* x_\alpha^*-P_\delta^* x^*}\wedge P_\delta^*y^*\xrightarrow{o}_\alpha0$ in $X^*$.
Note that
\begin{align*}P_\delta^*\big(\abs{x_\alpha^*-x^* }\wedge y^*\big)\leq& P_\delta^*\big(\abs{x_\alpha^*-x_{\alpha'}^* }\wedge
y^*\big)+P_\delta^*\big(\abs{x_{\alpha'}^*-x^* }\wedge y^*\big)\\
\leq & x_{\alpha,\alpha'}^*+P_\delta^*\big(\abs{x_{\alpha'}^*-x^* }\wedge y^*\big).\end{align*}
Taking $\alpha'\rightarrow\infty $, we have $P_\delta^*\big(\abs{x_\alpha^*-x^* }\wedge y^*\big)\leq \inf_{\alpha'}x_{\alpha,\alpha'}^*$. Now taking $\delta\rightarrow\infty$, we have $\abs{x_\alpha^*-x^* }\wedge y^*\leq\inf_{\alpha'}x_{\alpha,\alpha'}^*\downarrow_\alpha0$ in $X^*$. This proves $x_{\alpha}^*\xrightarrow{uo}x^*$ in $X^*$.
\end{proof}

We now apply Theorem~\ref{wstar} to study abstract (sub-)martingales in dual spaces. We refer the reader to \cite[Section~5]{gx} for the terminology. The following lemma is well known and follows from Nakano's Theorem (\cite[Theorem~1.67]{ali}).

\begin{lemma}\label{spf}Let $X$ be an order continuous Banach lattice. Then a positive functional $x_0^*>0$ is a weak unit of $X^*$ iff it is strictly positive on $X$.
\end{lemma}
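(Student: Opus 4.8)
The plan is to translate both conditions into statements about the \emph{carrier} of $x_0^*$ in $X$ and then read them off from Nakano's theorem. Since $X$ has order continuous norm it is Dedekind complete and every functional on $X$ is order continuous, so $X^*=X_n^\sim$; moreover $X^*$ is itself Dedekind complete, so ``$x_0^*$ is a weak unit of $X^*$'' means precisely that $y^*\wedge x_0^*=0$ forces $y^*=0$ for every $y^*\in X_+^*$. For $f\in X_+^*$ I write $N_f=\{x\in X:f(\abs{x})=0\}$ for the null ideal and $C_f=N_f^d$ for its carrier; order continuity of $f$ makes $N_f$ a band, so $N_f^{dd}=N_f$ and $C_f^d=N_f$. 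In this language $x_0^*$ is strictly positive on $X$ exactly when $N_{x_0^*}=\{0\}$, that is, when $C_{x_0^*}=X$. Nakano's theorem \cite[Theorem~1.67]{ali} supplies the bridge: for $f,g\in X_+^*$ one has $f\wedge g=0$ iff $C_f\cap C_g=\{0\}$ (the two carriers, being bands, then also being disjoint).

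First I would prove that strict positivity implies the weak unit property. Assuming $C_{x_0^*}=X$, take $y^*\in X_+^*$ with $y^*\wedge x_0^*=0$. By Nakano's theorem $C_{y^*}\cap C_{x_0^*}=\{0\}$, i.e. $C_{y^*}\cap X=\{0\}$, so $C_{y^*}=\{0\}$; hence $N_{y^*}=C_{y^*}^d=X$ and $y^*=0$. Thus $x_0^*$ is a weak unit. This direction is immediate once the carrier characterization is in hand.

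The converse is the step I expect to require the most care, since it demands the explicit construction of a nonzero functional disjoint from $x_0^*$. I would argue contrapositively: suppose $x_0^*$ is not strictly positive, so $N_{x_0^*}\neq\{0\}$, and fix $x>0$ in $N_{x_0^*}$. Let $B$ be the band generated by $x$; since $N_{x_0^*}$ is a band containing $x$ we have $B\subseteq N_{x_0^*}$, and the band projection $P_B$ exists because $X$ is Dedekind complete. Choosing $z^*\in X_+^*$ with $z^*(x)>0$ (available from Hahn--Banach applied to $x>0$ and then passing to the modulus) and setting $y^*:=P_B^*z^*$, I would verify that $y^*\geq0$, that $y^*(x)=z^*(P_Bx)=z^*(x)>0$ so $y^*\neq0$, and that $N_{y^*}\supseteq B^d$, whence $C_{y^*}\subseteq B\subseteq N_{x_0^*}=C_{x_0^*}^d$. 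Then $C_{y^*}\cap C_{x_0^*}=\{0\}$, and Nakano's theorem gives $y^*\wedge x_0^*=0$; the existence of such a nonzero $y^*$ shows $x_0^*$ is not a weak unit. The genuinely delicate point is the carrier computation $C_{y^*}\subseteq B$, which rests on $P_B$ being a lattice homomorphism, so that $P_B\abs{w}=\abs{P_Bw}$ and $y^*(\abs{w})=z^*(\abs{P_Bw})$ vanishes for $w\in B^d$. As a check, one can even bypass Nakano in this direction and confirm $y^*\wedge x_0^*=0$ by hand: for $0\leq w$, split $w=P_Bw+(w-P_Bw)$ and note $x_0^*(P_Bw)=0$ (as $P_Bw\in B\subseteq N_{x_0^*}$) and $y^*(w-P_Bw)=0$ (as $w-P_Bw\in B^d$), so the infimum formula for $y^*\wedge x_0^*$ forces it to vanish.
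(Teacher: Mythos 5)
Your proof is correct and follows exactly the route the paper indicates: the paper gives no details, stating only that the lemma ``is well known and follows from Nakano's Theorem (\cite[Theorem~1.67]{ali})'', and your argument is precisely that deduction, carried out via null ideals and carriers (using that $X^*=X_n^\sim$ and that null ideals of order continuous functionals are bands). The direct verification of $y^*\wedge x_0^*=0$ at the end is a nice bonus, but the core is the same Nakano-based approach.
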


\begin{corollary}\label{mar}Let $X$ be an order continuous Banach lattice with a weak unit. Suppose that $(E_n)$ is a bounded filtration on $X$ such that both $E_1$ and $E_1^*$ are strictly positive. Then every norm bounded submartingale relative to $(E_n^*)$ converges in uo and $\abs{\sigma}(X^*,X)$ to the same limit. Moreover, every norm bounded martingale is fixed, i.e.~it has the form $(E_n^*x^*)$ for some $x^*\in X^*$.
\end{corollary}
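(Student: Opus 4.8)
The plan is to reduce the submartingale convergence to Theorem~\ref{wstar} by verifying that a norm bounded submartingale relative to $(E_n^*)$ is uo-Cauchy in $X^*$, and then read off the martingale statement as a special case. The key conceptual point is that the filtration $(E_n^*)$ on $X^*$ is itself a bounded filtration on the order continuous dual lattice $X^*$, so the abstract martingale theory of \cite[Section~5]{gx} applies on $X^*$ once I check the positivity hypotheses transfer. First I would record that since $X$ is order continuous with a weak unit, $X^*$ is also a Banach lattice on which $(E_n^*)$ is a bounded positive filtration; the hypothesis that $E_1$ is strictly positive makes $E_1^*$ a well-behaved conditional expectation operator on $X^*$, and the hypothesis that $E_1^*$ is strictly positive is exactly what I need so that the weak-unit machinery (via Lemma~\ref{spf}) is available on $X^*$.

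**Next I would establish that the submartingale is uo-Cauchy.** For a norm bounded submartingale $(x_n^*)$ relative to $(E_n^*)$, the increasing sequence of conditional expectations forces a Doob-type decomposition: one writes $x_n^* = m_n^* + a_n^*$ with $(m_n^*)$ a martingale and $(a_n^*)$ a positive increasing predictable part. I would invoke the corresponding results from \cite[Section~5]{gx} on $X^*$ to conclude that a norm bounded (sub)martingale is automatically uo-Cauchy; this is the step where the order continuity of $X^*$ and the strict positivity of $E_1^*$ are used to control the increments and guarantee that the disjoint-type differences vanish in uo. Once uo-Cauchyness is in hand, Theorem~\ref{wstar} applies verbatim to the order continuous Banach lattice $X^*$: the net is norm bounded and uo-Cauchy, hence it converges in uo and in $\abs{\sigma}(X^{**},X^*)$, but since convergence against $X$ is what is asserted, I restrict the duality to $X \subset X^{**}$ to obtain convergence in $\abs{\sigma}(X^*,X)$ and in uo to the same limit $x^*$.

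**For the martingale assertion** I would specialize: a norm bounded martingale $(x_n^*)$ is in particular a norm bounded submartingale, so by the above it converges in uo and $\abs{\sigma}(X^*,X)$ to some $x^* \in X^*$. To see it is fixed, I would fix $m$ and use that $E_m^*$ is $\abs{\sigma}(X^*,X)$-to-$\abs{\sigma}(X^*,X)$ continuous on norm bounded sets (as the adjoint of the bounded operator $E_m$ on $X$), so that applying $E_m^*$ to the convergent net and using the martingale identity $E_m^* x_n^* = x_m^*$ for $n \geq m$ yields $E_m^* x^* = x_m^*$. Thus $(x_n^*) = (E_n^* x^*)$, as claimed.

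**The main obstacle I anticipate** is the uo-Cauchy step: Theorem~\ref{wstar} is only a \emph{consumer} of uo-Cauchyness, and the genuine work lies in producing it from the submartingale hypothesis. Here I must be careful that the abstract (sub)martingale convergence results of \cite[Section~5]{gx}, which are stated for a Banach lattice with its filtration, apply to $X^*$ with $(E_n^*)$; this is precisely why the hypotheses single out both $E_1$ and $E_1^*$ as strictly positive, since strict positivity of $E_1^*$ on $X$ is what certifies that $X^*$ carries the weak unit needed to run that theory and to invoke Lemma~\ref{spf}. Verifying that $(E_n^*)$ really is a bounded filtration on $X^*$ in the sense of \cite[Section~5]{gx}, and that the strict-positivity conditions are exactly the ones required there, is the delicate bookkeeping on which the whole argument rests.
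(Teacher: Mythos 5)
Your overall architecture matches the paper's: verify that $(E_n^*)$ is a legitimate filtration in the sense of \cite[Section~5]{gx}, invoke \cite[Theorem~5.6]{gx} to conclude that a norm bounded submartingale is uo-Cauchy in $X^*$, apply Theorem~\ref{wstar}, and finish the martingale claim by $w^*$-continuity of $E_k^*$ (your last paragraph is exactly the paper's argument). But two of your key steps rest on the assumption that $X^*$ is order continuous, and this is false in general: order continuity of $X$ does not pass to $X^*$ (take $X=\ell_1$ or $L_1[0,1]$, so that $X^*=\ell_\infty$ or $L_\infty$; note $\ell_1$ is precisely the space in the Remark following this corollary). Consequently your application of Theorem~\ref{wstar} ``verbatim to the order continuous Banach lattice $X^*$'' is wrong twice over: the theorem applied with base space $X^*$ would concern nets in $X^{**}$, not in $X^*$, and it would require order continuity of $X^*$, which you do not have. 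The correct move --- and the paper's --- is to apply Theorem~\ref{wstar} with the base space $X$ itself, which is order continuous by hypothesis; its conclusion is then literally about norm bounded uo-Cauchy nets in $X^*$ converging in uo and $\abs{\sigma}(X^*,X)$ to the same limit, so no ``restriction of duality'' from $X^{**}$ down to $X$ is needed or even meaningful.

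The same false premise infects your uo-Cauchy step. What \cite[Theorem~5.6]{gx} needs is not order continuity of $X^*$ but the \emph{abstract bistochastic} structure of $(E_n^*)$ on $X^*$: a weak unit of $X^*$ fixed by $E_1^*$, together with a strictly positive order continuous functional on $X^*$ fixed by its adjoint. The paper constructs both, and the roles of the two strict positivity hypotheses are the opposite of what you wrote: strict positivity of $E_1^*$ yields, via \cite[Lemma~5.3]{gx}, a weak unit $x_0$ of $X$ with $E_1x_0=x_0$, and this $x_0$, viewed in $X^{**}$, is the strictly positive order continuous functional on $X^*$ satisfying $E_1^{**}x_0=x_0$; whereas strict positivity of $E_1$ guarantees that $x_0^*:=E_1^*x_1^*=x_1^*\circ E_1$ is strictly positive on $X$ (here $x_1^*$ is a strictly positive functional on $X$, which exists by \cite[Theorem~4.15]{ali} since $X$ is order continuous with a weak unit), hence a weak unit of $X^*$ by Lemma~\ref{spf}, with $E_1^*x_0^*=x_0^*$. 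Your Doob-decomposition detour is not needed once this is done. Since you yourself flagged this bookkeeping as the crux of the argument but resolved it by appealing to an unavailable hypothesis, this is a genuine gap rather than a cosmetic slip.
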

\begin{proof} We show that $(E_n^*) $ is an abstract bistochastic filtration on $X^*$. Since $E_1^*$ is strictly positive, we have, by \cite[Lemma~5.3]{gx}, $E_1x_0=x_0$ for some weak unit $x_0>0$ in $X$. It is easily seen that $x_0$ acts a strictly positive order continuous functional on the order complete space $X^*$ and $E_1^{**}x_0=x_0$. Moreover, since $X$ is order continuous with a weak unit, there exists a strictly positive functional $x_1^*>0$ on $X$, by \cite[Theorem~4.15]{ali}. Clearly, $x_0^*:=E_1^*x_1^*=x_1^*\circ E_1$ is also strictly positive, and thus is a weak unit of $X^*$ by Lemma~\ref{spf}. Clearly, $E_1^*x_0^*=x_0^*$. This proves that $(E_n^*) $ is abstract bistochastic on $X^*$.\par

Let $(z_n^*)$ be a norm bounded submartingale relative to $(E_n^*)$. Then $(z_n^*) $ is uo-Cauchy in $X^*$, by \cite[Theorem~5.6]{gx}. Thus $(z_n^*)$ converges in uo and $\abs{\sigma}(X^*,X)$ to the same limit, by Theorem~\ref{wstar}. Suppose that $(x_n^*)$ is a norm bounded martingale. Then by what we have just proved, $x_n^*\xrightarrow{w^*}x^*$ for some $x^*\in X^*$. Fix any $k\geq 1$. For large $n$, we have $x_k^*=E_k^*x_n^*\xrightarrow{w^*} E_k^*x^*$. Therefore, $x_k^*=E_k^*x^*$ for all $k\geq 1$.
\end{proof}

\begin{remark}In contrast to \cite[Theorem~5.15]{gx}, we can not expect norm convergence for martingales in Theorem~\ref{mar}. The following example is modified from \cite[Example~6]{tro11}. Let $X=\ell_1$ and put $E_1$ to be the projection on $\ell_1$ consisting of $2\times 2$ diagonal blocks \begin{math}
  \Bigl[
  \begin{smallmatrix}
    \frac{1}{2} &\frac{1}{2}\\
 \frac{1}{2}     & \frac{1}{2}
  \end{smallmatrix}
  \Bigr]
\end{math}. Define $E_n$ by replacing the first $(n-1)$ diagonal blocks in $E_1$ with the identity matrix. Then $(E_n)$ is a (bounded) filtration on $\ell_1$. Clearly, $E_1$ and $E_1^*$ are both strictly positive. Consider the sequence $x_n^*=(1,-1,1,-1,\cdots)\in \ell_\infty$, where the first $2n$ coordinates are alternating $1$'s and $-1$'s, and all the other coordinates are $0$. It is easily seen that $(x_n^*)$ is a norm bounded martingale relative to $(E_n^*)$, but it is not norm convergent.
\end{remark}

\section{When does $w^*$-convergence imply uo-convergence?}

We now turn to study the spaces in whose dual space $w^*$-convergence implies $uo$-convergence. For this purpose, we need to use \cite[Theorem~3.1]{cw}. However, the implication (1)$\Rightarrow$(2) there fails in general.

\begin{example}[{\cite{wo}}]\label{nonato}Let's construct as follows a collection of closed line segments in the first quadrant of the plane, all of which start from the origin. Let $(d_n)$ be a strictly decreasing sequence in $(0,1]$ with $d_0=1$. Put $I_0$ be the unit interval along the positive $y$-axis. Take a sequence of line segments of length $d_1$ with slope strictly increasing to $+\infty$. We label this sequence as $\{I_{0,n}\}_1^\infty$ where $I_{0,n+1}$ has slope larger than the slope of $I_{0,n}$. Now for each $I_{0,n}$, we can construct another sequence of line segments of length $d_2$, whose slopes are strictly increasing to the slope of $I_{0,n}$ and such that if $n>1$, all of these line segments lie between $I_{0,n-1}$ and $I_{0,n}$. Continue in this manner using $d_3$ as the next length of line segments and so forth. Let $T$ be the union of all these line segments. Then $T$ is compact and Hausdorff. Let $X$ be the space of continuous functions on $T$ which vanish at the origin and are affine when restricted to each line segment. It is shown in \cite{wo} that $X$ is an AM-space whose dual space is lattice isometric to $\ell_1$.\par

Now let $\{t_n\}$ be the end points of the sequence of line segments which have length $d_2$ and lie between $I_{0,2}$ and $I_{0,3}$ with increasing slopes. Write $t_0=\lim_nt_n$. Consider the sequence $(\delta_{t_n})_1^\infty\subset X^*$, where $\delta_t$ is the point evaluation at $t$. Clearly, $\delta_{t_n}\xrightarrow{w^*}\delta_{t_0}$. If (1)$\Rightarrow$(2) in \cite[Theorem~3.1]{cw} were true, then $\wedge_1^\infty\abs{\delta_{t_n}-\delta_{t_0}}=0$. But one can easily see that $\delta_{t_n}\perp\delta_{t_0}$ for all $n\geq 1$. Thus,
$\wedge_1^\infty\abs{\delta_{t_n}-\delta_{t_0}}\geq \delta_{t_0}>0$, a contradiction.
\end{example}

Nevertheless, \cite[Theorem~3.1]{cw} still holds if the second condition there is replaced with ``for any order bounded sequence $(f_n)_1^\infty\subset E'$ with $f_n\rightarrow0$ in $\sigma(E',E)$ we have $\wedge_{n=1}^\infty \abs{f_n}=0$''. A simple modification of the original proof works.

\begin{lemma}\label{starse}Let $X$ be a Banach lattice. The following are equivalent:
\begin{enumerate}
\item\label{starsei1}for any sequence $(x_n^*)$ in $X^*$, if $x_n^*\xrightarrow{w^*}0$, then $x^*_n\xrightarrow{uo}0$.
\item\label{starsei2} for any sequence $(x_n^*)$ in $X^*$, if $x_n^*\xrightarrow{w^*}0$, then $\wedge_{n=1}^\infty\abs{x_n^*}=0$.
\end{enumerate}
In either case, $X^*$ is atomic.
\end{lemma}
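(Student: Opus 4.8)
The plan is to prove \eqref{starsei1}$\Rightarrow$\eqref{starsei2}, then deduce from \eqref{starsei2} that $X^*$ is atomic, and finally use this atomicity to obtain \eqref{starsei2}$\Rightarrow$\eqref{starsei1}; this yields both the equivalence and the last assertion at once. Throughout I use that $X^*$, as the dual of a Banach lattice, is order complete, so that $\wedge_{n=1}^\infty\abs{x_n^*}$ exists for every sequence $(x_n^*)$.

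For \eqref{starsei1}$\Rightarrow$\eqref{starsei2}, if $x_n^*\xrightarrow{w^*}0$ then $x_n^*\xrightarrow{uo}0$ by \eqref{starsei1}, and Lemma~\ref{sim2}\eqref{sim2i1} applied with the increasing sequence $n_k=k$ gives $\wedge_{n=1}^\infty\abs{x_n^*}=\inf_n\abs{x_n^*}=0$. This direction is immediate.

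The crux is showing that \eqref{starsei2} forces $X^*$ to be atomic, and this is the step I expect to be the main obstacle. A soft order-convergence argument will not suffice: passing from the vanishing of $\wedge_n\abs{x_n^*}$ for all $w^*$-null sequences to the structural conclusion that the atoms of $X^*$ are order dense genuinely requires the weak-compactness and disjointness techniques behind \cite{cw}. Concretely, I would invoke the corrected form of \cite[Theorem~3.1]{cw} discussed above: condition \eqref{starsei2} holds in particular for every \emph{order bounded} $w^*$-null sequence, which is exactly the repaired hypothesis of that theorem, and its conclusion is that $X^*$ is atomic. The care needed here is precisely to use only the order bounded version of the sequence condition, since Example~\ref{nonato} shows the unbounded version is the wrong hypothesis even when $X^*$ happens to be atomic.

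Finally, for \eqref{starsei2}$\Rightarrow$\eqref{starsei1}, I argue by subsequences, now knowing $X^*$ is an order complete atomic vector lattice. By Lemma~\ref{sim2}\eqref{sim2i2} it suffices, for a $w^*$-null sequence $(x_n^*)$, to check that $\inf_k\abs{x_{n_k}^*}=0$ for every increasing $(n_k)$. But each subsequence $(x_{n_k}^*)_k$ is again $w^*$-null, so \eqref{starsei2} applied to it gives $\wedge_k\abs{x_{n_k}^*}=0$; Lemma~\ref{sim2}\eqref{sim2i2} then yields $x_n^*\xrightarrow{uo}0$. The ``in either case'' clause follows because \eqref{starsei1} and \eqref{starsei2} are equivalent and \eqref{starsei2} forces $X^*$ to be atomic.
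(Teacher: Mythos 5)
Your proposal is correct and follows essentially the same route as the paper: the paper likewise deduces atomicity of $X^*$ from the corrected version of \cite[Theorem~3.1]{cw} (it routes both conditions through the third condition of that theorem, whereas you route condition \eqref{starsei2} through the repaired order-bounded second condition, which is the same point), and then obtains the equivalence of \eqref{starsei1} and \eqref{starsei2} from Lemma~\ref{sim2} exactly as you do. The only difference is bookkeeping: the paper shows each condition separately implies atomicity and then invokes Lemma~\ref{sim2}, while you arrange the implications in a cycle \eqref{starsei1}$\Rightarrow$\eqref{starsei2}$\Rightarrow$atomicity$\Rightarrow$\eqref{starsei1}; both are valid.
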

\begin{proof}One can easily see that \eqref{starsei1} and \eqref{starsei2} both imply the third condition in \cite[Theorem~3.1]{cw}. Hence, in either case, $X^*$ is atomic. Now the desired equivalence is an immediate consequence of Lemma~\ref{sim2}.
\end{proof}

\begin{lemma}\label{samlim}Let $X$ be a Banach lattice. The following are equivalent:
\begin{enumerate}
\item\label{samlimi1} $X$ has order continuous norm,
\item\label{samlimi2} every norm bounded simultaneously uo- and $w^*$-convergent net in $X^*$ converges to the same limit.
\end{enumerate}
\end{lemma}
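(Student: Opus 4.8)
The plan is to derive both implications from Theorem~\ref{u2w} together with the characterization of order continuity via disjoint sequences in the dual. The implication \eqref{samlimi1}$\Rightarrow$\eqref{samlimi2} is immediate: suppose $X$ is order continuous and $(x_\alpha^*)$ is a norm bounded net in $X^*$ with $x_\alpha^*\xrightarrow{uo}x^*$ and $x_\alpha^*\xrightarrow{w^*}y^*$. The net $(x_\alpha^*-x^*)$ is again norm bounded and uo-converges to $0$, so Theorem~\ref{u2w}, \eqref{u2wi1}$\Rightarrow$\eqref{u2wi2}, yields $x_\alpha^*-x^*\xrightarrow{w^*}0$, i.e.\ $x_\alpha^*\xrightarrow{w^*}x^*$. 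Since $w^*$-limits are unique, $x^*=y^*$.

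For the converse \eqref{samlimi2}$\Rightarrow$\eqref{samlimi1} I would argue by contraposition. If $X$ fails to have order continuous norm, then by \cite[Corollary~2.4.3]{mn1991} there is a norm bounded disjoint sequence $(x_n^*)$ in $X^*$ that is \emph{not} $w^*$-null. Since $X^*$ is order complete, Lemma~\ref{sim} gives $x_n^*\xrightarrow{uo}0$. The aim is to produce from $(x_n^*)$ a norm bounded net that is simultaneously uo- and $w^*$-convergent but to distinct limits, contradicting \eqref{samlimi2}. The difficulty is that $(x_n^*)$ is only known to be uo-null, not $w^*$-convergent, so it cannot be fed directly into \eqref{samlimi2}; I would remedy this by passing to $w^*$-convergent subnets.

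The key step runs as follows. Because $(x_n^*)$ is norm bounded, Banach--Alaoglu makes the set $\{x_n^*\}$ relatively $w^*$-compact. Take any $w^*$-convergent subnet $x_{n_\beta}^*\xrightarrow{w^*}y^*$. As a subnet of a uo-null sequence, $(x_{n_\beta}^*)$ still uo-converges to $0$, so \eqref{samlimi2} forces $y^*=0$. Thus every $w^*$-convergent subnet of $(x_n^*)$ has limit $0$; since the net is valued in a $w^*$-compact set, the standard compactness argument gives $x_n^*\xrightarrow{w^*}0$, contradicting the choice of $(x_n^*)$.

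I expect the main obstacle to be the passage to subnets: one must check that uo-convergence, being defined through order convergence of $(\abs{x_\alpha^*}\wedge y^*)$, is inherited by subnets (which follows since order convergence passes to subnets, a cofinal subfamily still being dominated by the same decreasing net), and that a net valued in a compact space all of whose convergent subnets share a common limit must itself converge to that limit. Both are routine once isolated, but they are the conceptual heart of the argument, since everything else reduces to invoking Lemma~\ref{sim}, Theorem~\ref{u2w}, and the Meyer-Nieberg characterization.
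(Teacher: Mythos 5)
Your proposal is correct and takes essentially the same route as the paper: the forward implication via Theorem~\ref{u2w}, and the converse by contraposition using \cite[Corollary~2.4.3]{mn1991} to get a norm bounded disjoint non-$w^*$-null sequence, Lemma~\ref{sim} to see it is uo-null, and a Banach--Alaoglu subnet argument. The only cosmetic difference is that the paper extracts one subnet $w^*$-converging to a nonzero limit and applies \eqref{samlimi2} to it directly, whereas you show every $w^*$-convergent subnet has limit $0$ and then conclude by compactness that the whole sequence is $w^*$-null; this is the same compactness argument packaged differently.
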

\begin{proof}\eqref{samlimi1}$\Rightarrow$\eqref{samlimi2} follows from Theorem~\ref{u2w}. Suppose \eqref{samlimi2} holds. If $X$ is not order continuous, then there exists a norm bounded disjoint sequence $(x_n^*)$ which does not $w^*$-converge to $0$; cf.~\cite[Corollary~2.4.3]{mn1991}. Therefore, we can find some subnet $(x_\alpha^*)$ of $(x_n^*)$ which $w^*$-converges to a non-zero functional $x^*\neq 0$ on $X$. But, by Lemma~\ref{sim}, $x_n^*\xrightarrow{uo}0$. Thus, $x_\alpha^*\xrightarrow{uo}0$. This yields a contradition.
\end{proof}

\begin{theorem}\label{w2u}Let $X $ be a Banach lattice. The following are equivalent:
\begin{enumerate}
 \item\label{w2ui1} $X$ is order continuous and atomic,
 \item\label{w2ui2} for every norm bounded net $(x_\alpha^*)$ in $X^*$, if $x_\alpha^*\xrightarrow{w^*}0$, then $x_\alpha^*\xrightarrow{uo}0$,
 \item\label{w2ui3} for every net $(x_\alpha^*)$ in $X^*$, if $x_\alpha^*\xrightarrow{w^*}0$, then $x_\alpha^*\xrightarrow{uo}0$.
 \end{enumerate}
\end{theorem}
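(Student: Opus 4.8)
The plan is to prove the cycle \eqref{w2ui1}$\Rightarrow$\eqref{w2ui3}$\Rightarrow$\eqref{w2ui2}$\Rightarrow$\eqref{w2ui1}. The implication \eqref{w2ui3}$\Rightarrow$\eqref{w2ui2} is immediate, since \eqref{w2ui2} only asks for the conclusion for norm bounded nets. The real content lies in \eqref{w2ui1}$\Rightarrow$\eqref{w2ui3} and \eqref{w2ui2}$\Rightarrow$\eqref{w2ui1}.

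For \eqref{w2ui1}$\Rightarrow$\eqref{w2ui3}, the guiding idea is that, in the atomic order complete lattice $X^*$, $uo$-convergence is exactly coordinatewise convergence along the atoms, while $w^*$-convergence already controls those coordinates. Let $\{e_i\}_{i\in I}$ be a maximal disjoint family of atoms of $X$. First I would check that the evaluation map $J\colon X^*\to\mathbb{R}^I$, $J(x^*)=(x^*(e_i))_i$, is an injective lattice homomorphism onto an ideal of $\mathbb{R}^I$: injectivity uses that, by order continuity, a functional is determined by its values on the order dense span of the atoms, and the lattice property follows from the identity $(x^*\vee y^*)(e_i)=\max\bigl(x^*(e_i),y^*(e_i)\bigr)$, which holds because each $e_i$ is an atom. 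Then, since $uo$-convergence in $\mathbb{R}^I$ is pointwise convergence, \cite[Lemma~3.4]{gx} gives that $x_\alpha^*\xrightarrow{uo}0$ in $X^*$ iff $x_\alpha^*(e_i)\to0$ for every $i$. As $x_\alpha^*\xrightarrow{w^*}0$ forces $x_\alpha^*(e_i)\to0$ for all $i$, the conclusion follows.

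For \eqref{w2ui2}$\Rightarrow$\eqref{w2ui1}, I would extract the three ingredients of \eqref{w2ui1} in turn. Since any $w^*$-convergent sequence is automatically norm bounded, \eqref{w2ui2} implies the first condition of Lemma~\ref{starse} for sequences, so $X^*$ is atomic. To obtain order continuity, suppose a norm bounded net $(x_\alpha^*)$ converges in $uo$ to $y^*$ and in $w^*$ to $x^*$; then $x_\alpha^*-x^*$ is a norm bounded $w^*$-null net, hence $uo$-null by \eqref{w2ui2}, so $x_\alpha^*\xrightarrow{uo}x^*$, and uniqueness of $uo$-limits gives $x^*=y^*$. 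Thus every norm bounded simultaneously $uo$- and $w^*$-convergent net has a single limit, and Lemma~\ref{samlim} yields that $X$ is order continuous.

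It remains to upgrade ``$X^*$ atomic'' to ``$X$ atomic''. If $X$ were not atomic, the band $B$ generated by the atoms of $X$ would be proper, so $B^d$ would be a nonzero atomless projection band and $(B^d)^*$ a nonzero atomic projection band of $X^*$. Fixing an atom $\varphi>0$ of $(B^d)^*$, order continuity of the norm makes $\varphi$ order continuous, so its null ideal is a band and its carrier $C_\varphi$ is a nonzero projection band of $B^d$ on which $\varphi$ is strictly positive. Two disjoint nonzero positive elements of $C_\varphi$ would, via the corresponding band projections, split $\varphi$ into two nonzero disjoint positive functionals below $\varphi$, contradicting that $\varphi$ is an atom; hence $C_\varphi$ is totally ordered, thus one-dimensional, and its generator is an atom of $B^d$, contradicting that $B^d$ is atomless. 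I expect this transfer of atomicity from $X^*$ to $X$ to be the main obstacle: ``atomic'' does not pass to duals in general (for instance $C[0,1]$ is atomless while its dual has atoms), and order continuity of $X$ is precisely what forces each atom of $(B^d)^*$ to be order continuous, which is what makes the carrier argument run. The other delicate point, the identification of $uo$-convergence with coordinatewise convergence in \eqref{w2ui1}$\Rightarrow$\eqref{w2ui3}, reduces cleanly to \cite[Lemma~3.4]{gx} once $J$ is shown to be an ideal embedding.
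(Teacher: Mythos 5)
Your proposal is correct and takes essentially the same route as the paper: the same cycle \eqref{w2ui1}$\Rightarrow$\eqref{w2ui3}$\Rightarrow$\eqref{w2ui2}$\Rightarrow$\eqref{w2ui1}, the same use of Lemma~\ref{samlim} (order continuity) and Lemma~\ref{starse} ($X^*$ atomic) for \eqref{w2ui2}$\Rightarrow$\eqref{w2ui1}, and the same ideal embedding of $X^*$ into $\mathbb{R}^\Gamma$ indexed by the atoms of $X$, reducing uo-convergence to coordinatewise convergence via \cite[Lemma~3.4]{gx}, for \eqref{w2ui1}$\Rightarrow$\eqref{w2ui3}. The one place you go beyond the paper is the carrier argument upgrading ``$X^*$ atomic plus $X$ order continuous'' to ``$X$ atomic'' --- the paper disposes of this with the bare assertion ``Hence, $X$ is also atomic'' --- and your argument there is correct and rightly identifies that order continuity is essential, since Example~\ref{nonato} shows the transfer of atomicity from $X^*$ to $X$ fails in general.
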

\begin{proof}\eqref{w2ui3}$\Rightarrow$\eqref{w2ui2} is obvious. Suppose \eqref{w2ui2} holds. By Lemma~\ref{samlim}, $X$ is order continuous. By Lemma~\ref{starse}, $X^*$ is atomic. Hence, $X$ is also atomic. This proves \eqref{w2ui2}$\Rightarrow$\eqref{w2ui1}.\par

Now suppose \eqref{w2ui1} holds. Let $(x_\gamma)_{\gamma\in \Gamma}$ be a complete disjoint system of atoms of $X$. For each $\gamma$, define $x_\gamma^*\in X^*$ by putting $x_\gamma^*(x_\lambda)=1$ if $\lambda=\gamma$ and $0$ otherwise. It is clear that $(x_\gamma^*)$ is a complete disjoint system of atoms of $X^*$, and that $X^*$ embeds as an ideal into $\mathbb{R}^\Gamma$ with $x_\gamma^*$ corresponding to the function which takes value $1$ at $\gamma$ and $0 $ elsewhere. Now given any net $(x_\alpha^*) $ in $X^*$, if $x_\alpha^*\xrightarrow{w^*}0$, then $x_\alpha^*(x_\gamma)\rightarrow 0$ for all $\gamma\in \Gamma$. It follows that $(x_\alpha^*)$ converges pointwise to $0$ in $\mathbb{R}^\Gamma$. Equivalently, $x_\alpha^*\xrightarrow{uo}0$ in $\mathbb{R}^\Gamma$, and hence in $X^*$ by \cite[Lemma~3.4]{gx}.
This proves \eqref{w2ui1}$\Rightarrow$\eqref{w2ui3}.
\end{proof}

\section{Simultaneous uo- and $w^*$-convergence}
The following result is a dual version of \cite[Proposition~3.9]{gx} and establishes properties of sequences with simultaneous uo- and $w^*$-convergence.
\begin{proposition}\label{simul}
Let $X$ be a $\sigma$-order complete Banach lattice, and $(x_n^*)$ a $w^*$-convergent sequence in $X^*$. If $x_n^*\xrightarrow{uo}0$, then $x_n^*\xrightarrow{\abs{\sigma}(X^*,X)}0$.
\end{proposition}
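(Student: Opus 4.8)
The plan is to fix a test vector $x\in X_+$ and prove that $\abs{x_n^*}(x)\to 0$; since $x$ is arbitrary and $\abs{x_n^*}(x)\to 0$ for all $x\in X_+$ is exactly the meaning of $x_n^*\xrightarrow{\abs{\sigma}(X^*,X)}0$, this suffices. The first move is to pass to the principal ideal $A=I_x$ generated by $x$, equipped with the order-unit norm whose unit is $x$. Because $X$ is $\sigma$-order complete, so is the ideal $A$; a $\sigma$-Dedekind complete Riesz space is uniformly complete, so $A$ is a norm-complete AM-space with unit, whence $A=C(K)$ for some compact Hausdorff $K$ by the Kakutani--Krein theorem, and $\sigma$-Dedekind completeness of $C(K)$ forces $K$ to be basically disconnected. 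The crucial consequence is that $C(K)$ then has the Grothendieck property, i.e.\ $w^*$-convergent sequences in its dual are weakly convergent. This is the one place where $\sigma$-order completeness is used, and its absence is precisely what allows Example~\ref{nonato}.

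Next I would analyze the restriction map $R\colon X^*\to A^*$, $Rf=f|_A$. Since $A$ is an ideal, $R$ is a lattice homomorphism, and its kernel $A^\circ=\{f\in X^*:f|_A=0\}$ is a band in the order complete lattice $X^*$; hence $R$ is order continuous. Writing $\mu_n=Rx_n^*\in A^*=C(K)^*=M(K)$, I record three facts. First, $\norm{\mu_n}=\abs{x_n^*}(x)$, obtained by evaluating $R\abs{x_n^*}=\abs{\mu_n}$ at the unit $x$. Second, since $(x_n^*)$ is $w^*$-convergent it is norm bounded, and $x_n^*(a)\to y^*(a)$ for every $a\in A$ (where $y^*$ is the $w^*$-limit), so the bounded sequence $\mu_n$ converges to $\mu:=Ry^*$ in $\sigma(M(K),C(K))$. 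Third, using that $R$ is an order continuous lattice homomorphism together with Lemma~\ref{sim2}, $\mu_n\xrightarrow{uo}0$ in $M(K)$: indeed $\inf_k\abs{x_{n_k}^*}=0$ for every subsequence by Lemma~\ref{sim2}\eqref{sim2i1}, $R$ carries these infima to $\inf_k\abs{\mu_{n_k}}=0$, and since $M(K)$ is order continuous Lemma~\ref{sim2}\eqref{sim2i2} returns uo-convergence. Thus $\mu_n\to 0$ almost everywhere in the $L_1$-representation $M(K)=L_1(\nu)$.

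Finally I would combine these. The Grothendieck property of $C(K)$ upgrades $\mu_n\xrightarrow{w^*}\mu$ to weak convergence in $M(K)$; hence $(\mu_n)$ is relatively weakly compact, so by the Dunford--Pettis criterion it is uniformly integrable in $L_1(\nu)$. A uniformly integrable sequence converging to $0$ almost everywhere converges to $0$ in $L_1$-norm by Vitali's theorem, so $\norm{\mu_n}=\abs{x_n^*}(x)\to 0$, which is what we wanted.

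The main obstacle is the Grothendieck step, and with it the correct deployment of $\sigma$-order completeness: one must confirm that the principal ideal $I_x$ is a genuinely $\sigma$-Dedekind complete $C(K)$ and invoke (or reprove) that such spaces are Grothendieck, since it is the failure of this property for non-$\sigma$-complete spaces that drives Example~\ref{nonato}. Two secondary points need care: checking that the restriction $R$ is order continuous, so that uo-nullity descends cleanly through $R$; and applying Vitali's theorem when the representing measure $\nu$ need not be finite, which is handled by restricting to the $\sigma$-finite carrier supporting the countably many $\mu_n$.
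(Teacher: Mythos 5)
Your argument is correct in outline, but it takes a genuinely different and much heavier route than the paper's. The paper's proof is five lines: fix $x\in X_+$ and $\varepsilon>0$; by \cite[Theorem~4.42]{ali} (whose hypotheses are exactly that $X$ is $\sigma$-order complete and $(x_n^*)$ is $w^*$-convergent) there is $y^*\in X_+^*$ with $\abs{x_n^*}(x)-\bigl(\abs{x_n^*}\wedge y^*\bigr)(x)<\varepsilon$ for \emph{all} $n$; since $x_n^*\xrightarrow{uo}0$ gives $\abs{x_n^*}\wedge y^*\xrightarrow{o}0$ and hence $\bigl(\abs{x_n^*}\wedge y^*\bigr)(x)\to0$, it follows that $\limsup_n\abs{x_n^*}(x)\leq\varepsilon$. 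You instead localize to the principal ideal $I_x\cong C(K)$ with $K$ basically disconnected, invoke the Ando--Seever theorem that such $C(K)$-spaces are Grothendieck, and then work in the AL-space $M(K)$, where uo-convergence is a.e.\ convergence and weak plus a.e.\ convergence forces norm convergence. This is a legitimate alternative and has one genuine merit: it makes transparent \emph{why} $\sigma$-order completeness is the right hypothesis (it is precisely what makes the principal ideals Grothendieck spaces). The cost is that you route the proof through Kakutani representation, Ando--Seever, Dunford--Pettis and Vitali, i.e.\ through machinery that is at least as deep as --- and historically built on --- the uniform almost-order-boundedness lemma the paper cites directly. Note also that the counterexample showing $\sigma$-order completeness cannot be dropped is Example~\ref{counexa2} ($C[0,1]$), not Example~\ref{nonato}.

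Two steps need repair, though neither is fatal. First, your justification that the restriction map $R\colon X^*\to I_x^*$ is order continuous --- ``its kernel is a band, hence $R$ is order continuous'' --- is not a valid implication: an injective lattice homomorphism has kernel $\{0\}$, a band, yet need not be order continuous (e.g.\ the inclusion of $C[0,1]$ into $\mathbb{R}^{[0,1]}$). Moreover $R$ is in general not onto $I_x^*$ (take $X=L_1[0,1]$, $x=\mathbf{1}$), so one cannot identify $R$ with a band projection followed by a surjective lattice isomorphism. What you actually need is true and has a short direct proof: if $0\leq\nu\leq R\abs{x_{n_k}^*}$ in $I_x^*$ for all $k$, the Kantorovich extension $\tilde\nu(z)=\sup\{\nu(a):a\in I_x,\ 0\leq a\leq z\}$ defines a positive functional on $X$ with $\tilde\nu\leq\inf_k\abs{x_{n_k}^*}=0$, whence $\nu=0$; this yields $\inf_k\abs{\mu_{n_k}}=0$ and lets Lemma~\ref{sim2}\eqref{sim2i2} apply in $M(K)$. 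Second, Vitali's theorem in the form ``uniformly integrable $+$ a.e.\ $\Rightarrow$ norm'' fails on infinite measure spaces (mass can escape to infinity), so ``restricting to a $\sigma$-finite carrier'' must be made precise: all $\mu_n$ lie in the band of measures absolutely continuous with respect to the finite measure $\lambda=\sum_n 2^{-n}\abs{\mu_n}/(1+\norm{\mu_n})$; this band is weakly closed and lattice isometric to $L_1(\lambda)$ by Radon--Nikodym, and there your weak-compactness-plus-a.e.\ argument (or, more simply, the classical fact that weak and a.e.\ convergence together imply norm convergence in $L_1$ of a finite measure) goes through.
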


\begin{proof}Fix any $x\in X_+$. For any $\varepsilon>0$, we have, by \cite[Theorem~4.42]{ali}, there exists $y^*\in X_+^*$ such that
$\abs{x_n^*}(x)-\abs{x_n^*}\wedge y^*(x)<\varepsilon$. Since $x_n^*\xrightarrow{uo}0$, we have $\abs{x_n^*}\wedge y^*\xrightarrow{o}0$, and thus, $\abs{x_n^*}\wedge y^*(x)\rightarrow0$. It follows that $\limsup_n\abs{x_n^*}(x)\leq\varepsilon$. Hence, $\lim_n\abs{x_n^*}(x)=0$.
\end{proof}

In view of Lemma~\ref{samlim}, one can not replace sequences with nets in Proposition~\ref{simul} in general. The following example shows that the condition that $X$ is $\sigma$-order complete can not be removed from the assumption of Proposition~\ref{simul}.

\begin{example}\label{counexa2} The example is taken from \cite[p.~762]{wnuk}. Let $X=C[0,1]$. Define $x_n^*=\delta_{\frac{1}{3n-3}}-\delta_{\frac{1}{3n-2}}+\delta_{\frac{1}{3n-1}}-\delta_{\frac{1}{3n-3}}$ for even $n$ and $x_n^*=\delta_{\frac{1}{3n-2}}-\delta_{\frac{1}{3n-1}}$ for odd $n$, where $\delta_t$ denotes the point evaluation at $t$. Then $(x_n^*)$ is a disjoint sequence in $X^*$. Since $X^*$ is order complete, $x_n^*\xrightarrow{uo}0$, by Lemma~\ref{sim}. It is clear that $x_n^*\xrightarrow{w^*}0$. But $\abs{x_{2n}^*}\xrightarrow{w^*}4\delta_0$ and $\abs{x_{2n+1}^*}\xrightarrow{w^*}2\delta_0$. Hence, $(\abs{x_n^*})$ does not converge in $w^*$. It follows that $(x_n^*)$ does not converge in $\abs{\sigma}(X^*,X)$.
\end{example}

We now characterize the spaces in whose dual space simultaneous uo- and $w^*$-convergence of sequences implies weak convergence. Recall that a Banach lattice $X$ is said to have the positive Grothendick property if every $w^*$-null positive sequence in $X^*$ is $w$-null.

\begin{proposition}\label{pg}Let $X$ be a $\sigma$-order complete Banach lattice. The following are equivalent:
\begin{enumerate}
\item\label{pgi1} $X$ has the positive Grothendick property,
\item\label{pgi2} for any sequence $(x_n^*)\subset X^*$, if $x_n^*\xrightarrow[uo]{w^*}0$, then $x_n^*\xrightarrow{w}0$,
\item \label{pgi3} for any sequence $(x_n^*)\subset X^*$, if $x_n^*\xrightarrow[uo]{w^*}0$, then $x_n^*\xrightarrow{\abs{\sigma}(X^*,X^{**})}0$.
\end{enumerate}
\end{proposition}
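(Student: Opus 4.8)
The plan is to prove the cycle $\eqref{pgi1}\Rightarrow\eqref{pgi3}\Rightarrow\eqref{pgi2}\Rightarrow\eqref{pgi1}$. The implication $\eqref{pgi3}\Rightarrow\eqref{pgi2}$ comes for free by domination: if $x_n^*\xrightarrow{\abs{\sigma}(X^*,X^{**})}0$, then for every $x^{**}\in X^{**}$ we have $\bigabs{\langle x_n^*,x^{**}\rangle}\le\langle\abs{x_n^*},\abs{x^{**}}\rangle\to0$, so $x_n^*\xrightarrow{w}0$. The real content is spread across the remaining two implications, with $\eqref{pgi2}\Rightarrow\eqref{pgi1}$ being the hard one.

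For $\eqref{pgi1}\Rightarrow\eqref{pgi3}$ I would feed the hypothesis into Proposition~\ref{simul}. Suppose $(x_n^*)$ is $w^*$-null and uo-null. Since $X$ is $\sigma$-order complete, Proposition~\ref{simul} yields $x_n^*\xrightarrow{\abs{\sigma}(X^*,X)}0$; in particular $\abs{x_n^*}(x)\to0$ for every $x\in X_+$, which says precisely that the \emph{positive} sequence $(\abs{x_n^*})$ is $w^*$-null. Now the positive Grothendieck property applies to $(\abs{x_n^*})$ and gives $\abs{x_n^*}\xrightarrow{w}0$, i.e.\ $\langle\abs{x_n^*},x^{**}\rangle\to0$ for all $x^{**}\in X^{**}$. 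Specializing to $x^{**}=\abs{y^{**}}$ for an arbitrary $y^{**}\in X^{**}$ gives exactly $x_n^*\xrightarrow{\abs{\sigma}(X^*,X^{**})}0$, as required.

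The substance is in $\eqref{pgi2}\Rightarrow\eqref{pgi1}$, which I would prove by contraposition. Assume $X$ fails the positive Grothendieck property, so there is a sequence $(x_n^*)$ in $X_+^*$ with $x_n^*\xrightarrow{w^*}0$ but $x_n^*\not\xrightarrow{w}0$. Passing to a subsequence and using $x_n^*\ge0$, fix $x^{**}\in X_+^{**}$ and $\varepsilon>0$ with $\langle x_n^*,x^{**}\rangle\ge\varepsilon$ for all $n$. The goal is to extract from $(x_n^*)$ a sequence that is simultaneously uo- and $w^*$-null yet still separated from $0$ by $x^{**}$, contradicting \eqref{pgi2}. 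The natural reduction is the claim that every positive $w^*$-null sequence in the dual of a $\sigma$-order complete Banach lattice has a uo-null subsequence; granting it, any uo-null subsequence $(x_{n_k}^*)$ remains $w^*$-null and still satisfies $\langle x_{n_k}^*,x^{**}\rangle\ge\varepsilon$, hence is uo- and $w^*$-null but not $w$-null, the desired contradiction.

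Thus the main obstacle is this extraction claim, which I would attack by a disjointification/gliding-hump argument carried out in the predual $X$, exploiting $\sigma$-order completeness in the spirit of the proof of Proposition~\ref{simul} (via \cite[Theorem~4.42]{ali}). The delicate point, and the reason the extraction is subtle, is that one cannot merely pass to a disjoint subsequence of $(x_n^*)$ and invoke Lemma~\ref{sim}: a disjoint subsequence may fail to witness the loss of weak nullity. Indeed, when $X^*$ is of $L_\infty$- or $C(K)$-type every bounded disjoint sequence is already $w$-null, so the witness must be drawn from an order-null (e.g.\ decreasing) subsequence instead; whereas when $X^*=\ell_1$ it is precisely a disjoint, normalized subsequence (non-$w$-null by the Schur property) that is needed. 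The correct extraction must therefore interpolate between these two regimes, and I expect $\sigma$-order completeness of $X$ to be exactly what controls the order structure of the tail of $(x_n^*)$ well enough to produce a uo-null subsequence in all cases.
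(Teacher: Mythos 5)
Your implications (1)$\Rightarrow$(3) and (3)$\Rightarrow$(2) are correct and essentially identical to the paper's own argument. The genuine gap is in (2)$\Rightarrow$(1): you reduce it, by contraposition, to the claim that every positive $w^*$-null sequence in the dual of a $\sigma$-order complete Banach lattice has a uo-null subsequence, and you never prove this claim; you yourself call it ``the main obstacle'' and offer only heuristics about interpolating between the $C(K)$ and $\ell_1$ regimes. Since all the content of the implication sits in that claim, the proof is incomplete. Worse, the claim is false. Let $\nu$ be the direct sum of $\mathfrak{c}$ copies of Lebesgue measure on $[0,1]$, indexed by $\gamma<\mathfrak{c}$, and let $X=L_1(\nu)$, an order complete AL-space with $X^*=L_\infty(\nu)$. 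Enumerate all infinite subsets of $\mathbb{N}$ as $\{S_\gamma\}_{\gamma<\mathfrak{c}}$. On the $\gamma$-th copy $\Omega_\gamma$ of $[0,1]$, let the functions $(z_n|_{\Omega_\gamma})_{n\in S_\gamma}$ run through the typewriter sequence $\chi_{[0,1]},\chi_{[0,\frac12]},\chi_{[\frac12,1]},\chi_{[0,\frac14]},\dots$ (whose $L_1$-norms tend to $0$ but which equals $1$ infinitely often at every point), and set $z_n|_{\Omega_\gamma}=0$ for $n\notin S_\gamma$. Then $0\le z_n\le\mathbf{1}$, and $z_n\xrightarrow{w^*}0$ because every $h\in L_1(\nu)$ is supported on countably many copies, each coordinate integral tends to $0$, and the terms are dominated by the summable numbers $\norm{h|_{\Omega_\gamma}}_1$. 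Yet no subsequence $(z_n)_{n\in S}$ is uo-null: choose $\gamma$ with $S_\gamma=S$ and test against $y=\chi_{\Omega_\gamma}$; then $\sup_{k\in S,\,k\ge m}(z_k\wedge y)=\chi_{\Omega_\gamma}$ for every $m$, so $z_k\wedge y\not\xrightarrow{o}0$ along $S$.

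This is why the paper's proof of (2)$\Rightarrow$(1) takes a different, direct route that never extracts uo-null subsequences from arbitrary positive $w^*$-null sequences. It verifies the known characterization of the positive Grothendieck property, \cite[Theorem~5.3.13]{mn1991}, by testing (2) only on two special classes of sequences, each of which is \emph{automatically} uo-null: (a) decreasing sequences $x_n^*\downarrow 0$, which are $w^*$-null and order (hence uo-) null, so (2) makes them weakly null and Dini's theorem upgrades this to norm convergence, proving that $X^*$ is order continuous; and (b) disjoint positive $w^*$-null sequences, which are uo-null by Lemma~\ref{sim} since $X^*$ is order complete, so (2) makes them weakly null. If you wish to keep a contrapositive formulation, you would first have to invoke \cite[Theorem~5.3.13]{mn1991} to know that failure of the positive Grothendieck property is always witnessed by a sequence of type (a) or (b) --- which is exactly the paper's argument read backwards, and leaves nothing for your extraction strategy to do.
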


\begin{proof}
Assume \eqref{pgi1} holds. For $x_n^*\xrightarrow[uo]{w^*}0$, we have, by Proposition~\ref{simul}, $\abs{x_n^*}\xrightarrow{w^*}0$. Therefore, $\abs{x_n^*}\xrightarrow{w}0$ by the positive Grothendick property. This proves \eqref{pgi1}$\Rightarrow$\eqref{pgi3}. The implication \eqref{pgi3}$\Rightarrow$\eqref{pgi2} is clear.

Assume now \eqref{pgi2} holds. We first show that $X^*$ has order continuous norm. Since $X^*$ is order complete, it suffices to show that $X^*$ has $\sigma$-order continuous norm. To this end, let $x_n^*\downarrow0$. Then $x_n^*(x)\rightarrow0$ for any $x\in X$. Thus, $x_n^*\xrightarrow[o]{w^*}0$. It follows from assumption that $x_n^*\xrightarrow{w}0$. Now Dini's Theorem (\cite[Theorem~3.52]{ali}) implies $\norm{x_n^*}\rightarrow0$. This proves that $X^*$ has $\sigma$-order continuous norm.\par

Now let $(x_n^*)$ be a disjoint sequence in $X^*_+$ such that $x_n^*\xrightarrow{w^*}0$. By Lemma~\ref{sim}, we have $x_n^*\xrightarrow{uo}0$. Therefore, $x_n^*\xrightarrow{w}0$ by assumption. It follows from \cite[Theorem~5.3.13]{mn1991} that $X$ has the positive Grothendick property. This proves \eqref{pgi2}$\Rightarrow$\eqref{pgi1}.
\end{proof}

The following result characterizes the spaces in whose dual space simultaneous uo- and $w^*$-convergence of sequences implies norm convergence. Recall first that a Banach lattice $X$ is said to have the dual positive Schur property if every $w^*$-null positive sequence in $X^*$ is norm null. It is easily seen that $C(K)$-spaces have the dual positive Schur property and hence the positive Grothendick property.

\begin{proposition}\label{ps}Let $X$ be $\sigma$-order complete. The following are equivalent:
\begin{enumerate}
\item\label{psi1} $X$ has the dual positive Schur property
\item\label{psi2} for any sequence $(x_n^*)\subset X^*$, if $x^*_n\xrightarrow[uo]{w^*}0$, then $\norm{x_n^*}\rightarrow 0$.
\end{enumerate}
\end{proposition}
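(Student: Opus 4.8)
The plan is to mirror the proof of Proposition~\ref{pg}, replacing weak convergence by norm convergence throughout and the positive Grothendick property by the dual positive Schur property. The two implications then run almost in parallel, with the Schur direction being slightly simpler because condition \eqref{psi2} delivers norm convergence directly, so no appeal to Dini's theorem is needed.

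For \eqref{psi1}$\Rightarrow$\eqref{psi2}, suppose $X$ has the dual positive Schur property and let $x_n^*\xrightarrow[uo]{w^*}0$. Since $X$ is $\sigma$-order complete and $(x_n^*)$ is $w^*$-convergent with $x_n^*\xrightarrow{uo}0$, Proposition~\ref{simul} gives $x_n^*\xrightarrow{\abs{\sigma}(X^*,X)}0$; equivalently, the positive sequence $(\abs{x_n^*})$ is $w^*$-null. The dual positive Schur property then forces $\norm{\abs{x_n^*}}\to0$, and since $\norm{x_n^*}=\norm{\abs{x_n^*}}$ we obtain $\norm{x_n^*}\to0$. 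This direction is immediate once Proposition~\ref{simul} is in hand.

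For \eqref{psi2}$\Rightarrow$\eqref{psi1}, I would first establish that $X^*$ has order continuous norm, exactly as in the proof of Proposition~\ref{pg}: given $x_n^*\downarrow0$ in $X^*$ one has $x_n^*\xrightarrow{w^*}0$ and, since order convergence implies uo-convergence, $x_n^*\xrightarrow{uo}0$; hence $x_n^*\xrightarrow[uo]{w^*}0$ and \eqref{psi2} yields $\norm{x_n^*}\to0$. As $X^*$ is order complete, $\sigma$-order continuity of the norm upgrades to order continuity. Next, let $(x_n^*)$ be any disjoint positive sequence in $X^*$ with $x_n^*\xrightarrow{w^*}0$. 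By Lemma~\ref{sim} (applicable since $X^*$ is $\sigma$-order complete), $x_n^*\xrightarrow{uo}0$, so \eqref{psi2} again gives $\norm{x_n^*}\to0$. Thus every disjoint positive $w^*$-null sequence in $X^*$ is norm null, and I would then invoke the characterization of the dual positive Schur property in terms of disjoint sequences (the analogue of the result \cite[Theorem~5.3.13]{mn1991} used in Proposition~\ref{pg}, cf.~\cite{wnuk}) to conclude \eqref{psi1}.

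The main obstacle is this last step: passing from ``every \emph{disjoint} positive $w^*$-null sequence is norm null'' to the full dual positive Schur property for \emph{arbitrary} positive $w^*$-null sequences. This reduction is a disjointification (gliding hump) argument, and the order continuity of $X^*$ established above is precisely what makes it go through, by allowing a non-norm-null positive $w^*$-null sequence to be thinned to an essentially disjoint, still $w^*$-null sequence whose norms stay bounded away from zero. Everything else is a routine transcription of the Grothendieck case.
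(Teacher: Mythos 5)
Your proof is correct and takes essentially the same route as the paper, which establishes \eqref{psi1}$\Rightarrow$\eqref{psi2} via Proposition~\ref{simul} plus the dual positive Schur property, and \eqref{psi2}$\Rightarrow$\eqref{psi1} via Lemma~\ref{sim} together with Wnuk's disjoint-sequence characterization of that property (\cite[Proposition~2.3]{wnuk}). The only difference is that your detour through order continuity of $X^*$ is unnecessary: the cited characterization holds for arbitrary Banach lattices (which is why, as the paper's subsequent remark notes, \eqref{psi2}$\Rightarrow$\eqref{psi1} needs no $\sigma$-order completeness at all), so once disjoint positive $w^*$-null sequences are shown to be norm null you may invoke it directly.
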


\begin{proof}\eqref{psi1}$\Rightarrow$\eqref{psi2} follows from Theorem~\ref{simul}.
\eqref{psi2}$\Rightarrow$\eqref{psi1} follows from \cite[Proposition~2.3]{wnuk} and Lemma~\ref{sim}.
\end{proof}

\begin{remark}
Observe that the condition that $X$ is $\sigma$-order complete is not needed for the implications \eqref{pgi3}$\Rightarrow$\eqref{pgi1} and \eqref{pgi2}$\Rightarrow$\eqref{pgi1} in Theorem~\ref{pg}, and is also not needed for \eqref{psi2}$\Rightarrow$\eqref{psi1} in Theorem~\ref{ps}.
All the reverse implications may fail without $\sigma$-order completeness of $X$. Indeed, let $X$ and $(x_n^*)$ be as in Example~\ref{counexa2}.
Recall that $x_n^*\xrightarrow[uo]{w^*}0$. We claim that $(x_n^*)$ does not converge weakly, and therefore, not in $\abs{\sigma}(X^*,X^{**})$ or in norm. Assume, otherwise, $(x_n^*)$ converges weakly. Then the limit must be $0$. It follows that $x_n^*\xrightarrow[uo]{w}0$. Note that $X^*$ is an AL-space, and hence has the positive Schur property. Therefore, $\norm{x_n^*}\rightarrow0$, by \cite[Theorem~3.11]{gx}, contradicting that $\norm{x_{2n}^*}=4$ for all $n\geq 1$.
\end{remark}

In general, we do not know about the situation for nets with simultaneous $w^*$- and uo-convergence. Below we consider such nets in the special case when $X$ has order continuous norm. Observe first that, in this case, in order for simultaneous $w^*$- and uo-convergence to imply norm convergence, $X$ must be finite-dimensional. Indeed, it follows from Proposition~\ref{ps} and \cite[Proposition~2.1]{wnuk}. For weak convergence, we have the following.

\begin{proposition}
Let $X$ be an order continuous Banach lattice. The following statements are equivalent.
\begin{enumerate}
\item\label{orpgi0}for any norm bounded net $(x_\alpha^*)$ in $X^*$, if $x_\alpha^*\xrightarrow[uo]{w^*}0$, then $x_\alpha^*\xrightarrow{w}0$,
\item\label{orpgi1} for any norm bounded net $(x_\alpha^*)$ in $X^*$, if $x_\alpha^*\xrightarrow{uo}0$, then $x_\alpha^*\xrightarrow{w}0$,
\item\label{orpgi2} $X$ has the positive Grothendick property,
\item\label{orpgi3} $X$ is reflexive.
\end{enumerate}
\end{proposition}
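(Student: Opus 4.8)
The plan is to establish the cycle of implications
$\eqref{orpgi3}\Rightarrow\eqref{orpgi1}\Rightarrow\eqref{orpgi0}\Rightarrow\eqref{orpgi2}\Rightarrow\eqref{orpgi3}$,
exploiting the fact that $X$ is already assumed order continuous so that the machinery of the previous sections applies directly. The two ``easy'' links sit at the ends: if $X$ is reflexive then bounded sets in $X^*$ are relatively weakly compact, so I expect \eqref{orpgi3}$\Rightarrow$\eqref{orpgi1} to follow by first noting that $w^*$-convergence and weak convergence coincide on $X^*$ under reflexivity, and then invoking Theorem~\ref{u2w} (order continuity) to turn the uo-hypothesis into a $w^*$-null, hence weakly null, conclusion. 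The step \eqref{orpgi1}$\Rightarrow$\eqref{orpgi0} is a pure weakening of the hypothesis: a net that is simultaneously $w^*$- and uo-null is in particular uo-null, so \eqref{orpgi1} applies verbatim.

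The substantive content lies in the middle and in closing the loop. For \eqref{orpgi0}$\Rightarrow$\eqref{orpgi2}, I would specialize the net hypothesis to sequences and check that the positive Grothendick property drops out: given a positive $w^*$-null sequence $(x_n^*)$ in $X^*$, I want to produce the uo-convergence needed to apply \eqref{orpgi0}. Since every norm bounded $w^*$-null \emph{sequence} in an order continuous $X^*$ is handled by the sequence-level results of Section~4, the cleanest route is to reduce to a disjoint sequence via a standard disjointification/subsequence argument, use Lemma~\ref{sim} to get uo-convergence to $0$ for the disjoint sequence, conclude weak nullity from \eqref{orpgi0}, and then transfer back to the original sequence. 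This shows every $w^*$-null positive sequence is weakly null, which is precisely the positive Grothendick property.

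The main obstacle, and the implication I would treat most carefully, is \eqref{orpgi2}$\Rightarrow$\eqref{orpgi3}: deducing reflexivity from the positive Grothendick property together with order continuity. The key structural fact is that an order continuous Banach lattice is reflexive if and only if it contains no lattice copy of $c_0$ or $\ell_1$; equivalently one shows $X^*$ is also order continuous and then rules out $\ell_1$. Here I would argue that order continuity of $X$ gives order continuity of $X^*$ exactly when $X$ contains no copy of $\ell_1$, while the positive Grothendick property forbids the complementary obstruction: a Grothendick-type space that is order continuous cannot support the bounded disjoint positive $w^*$-null sequences that a copy of $c_0$ or a non-reflexive AL/AM-component would produce without them also being weakly null, forcing the unit ball to be weakly compact. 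Concretely, I expect to invoke a characterization from \cite{mn1991} to the effect that an order continuous Banach lattice with the (positive) Grothendick property is reflexive, so the real work is locating and correctly citing that equivalence rather than reproving it. The danger to watch is that the positive Grothendick property is strictly weaker than the full Grothendick property, so I must make sure the disjoint positive sequences I manufacture to detect non-reflexivity are genuinely $w^*$-null and positive, which the order continuity of $X$ and Lemma~\ref{sim} should guarantee.
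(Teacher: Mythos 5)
Your outer links are correct and coincide with the paper's: \eqref{orpgi3}$\Rightarrow$\eqref{orpgi1} is exactly the paper's argument (Theorem~\ref{u2w} turns uo-nullity of a bounded net into $w^*$-nullity, and reflexivity identifies $w^*$- with weak convergence on $X^*$), and \eqref{orpgi1}$\Rightarrow$\eqref{orpgi0} is indeed a trivial weakening of hypothesis (the paper notes the stronger fact that \eqref{orpgi0} and \eqref{orpgi1} are \emph{equivalent}, again by Theorem~\ref{u2w}, but your one-way direction suffices for a cycle). The genuine gap is in \eqref{orpgi0}$\Rightarrow$\eqref{orpgi2}: the step ``reduce to a disjoint sequence via a standard disjointification/subsequence argument \dots and then transfer back to the original sequence'' is not a standard elementary argument. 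A positive $w^*$-null sequence need not be uo-null at all (the typewriter sequence of characteristic functions $\chi_{I_n}$ with $\abs{I_n}\to 0$ in $L_\infty[0,1]=L_1[0,1]^*$ is positive and $w^*$-null, but $\sup_{k\geq n}\chi_{I_k}=\mathbf{1}$ for every $n$, so it is not uo-null), and there is no soft way to pass weak nullity of a disjointified sequence back to the original positive sequence; that transfer is precisely the content of \cite[Theorem~5.3.13]{mn1991}, equivalently of the paper's Proposition~\ref{pg}. The repair is short and is what the paper does: a $w^*$-convergent \emph{sequence} is automatically norm bounded by the uniform boundedness principle, so \eqref{orpgi0} restricted to sequences is literally condition~\eqref{pgi2} of Proposition~\ref{pg}; since an order continuous Banach lattice is order complete, Proposition~\ref{pg} applies and gives the positive Grothendick property at once.

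For the key implication \eqref{orpgi2}$\Rightarrow$\eqref{orpgi3} you propose to locate and cite a known characterization rather than prove anything. Such a citation does exist --- the paper's closing remark confirms that \cite[Theorem~5.3.13]{mn1991} yields \eqref{orpgi2}$\Leftrightarrow$\eqref{orpgi3} --- so your cycle can be closed this way, but this is a genuinely different route, and it forfeits what the paper regards as the point of the proposition: an independent, uo-convergence-based proof of this known equivalence. The paper's argument runs: the positive Grothendick property forces $X^*$ to be order continuous, hence a KB-space; if $X^{**}$ were not KB, then $X^*$ would contain a lattice copy of $\ell_1$ by \cite[Theorem~4.69]{ali}, whose standard basis $(e_n)$ is disjoint, hence uo-null in $X^*$ (Lemma~\ref{sim} and \cite[Lemma~4.5]{gx}), hence $w^*$-null by Theorem~\ref{u2w} (this is where order continuity of $X$ enters), hence weakly null by the positive Grothendick property --- contradicting the Schur property of $\ell_1$; then $X^*$ and $X^{**}$ both KB give reflexivity of $X^*$, hence of $X$, by \cite[Theorem~4.70]{ali}. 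Your fallback sketch would not substitute for this: the obstruction that must be excluded is a lattice copy of $\ell_1$ inside $X^*$, not copies of $c_0$ or $\ell_1$ inside $X$, and ``forcing the unit ball to be weakly compact'' is asserted rather than argued.
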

\begin{proof}The equivalence of \eqref{orpgi0} and \eqref{orpgi1} follows from Theorem~\ref{u2w}. The implication \eqref{orpgi3}$\Rightarrow$\eqref{orpgi1} follows from Theorem~\ref{u2w} again. The implication \eqref{orpgi1}$\Rightarrow$\eqref{orpgi2} follows from Proposition~\ref{pg}.\par

It remains to prove \eqref{orpgi2}$\Rightarrow$\eqref{orpgi3}.
Suppose that $X$ has the positive Grothendick property. We first claim that $X^*$ is order continuous, and therefore, is KB; cf.~\cite[Theorem~4.59]{ali}. Indeed, for any $x_n^*\downarrow 0$, we have $x_n^*\xrightarrow{w^*}0$. Thus, $x^*_n\xrightarrow{w}0$, by the positive Grothendick property. Dini's theorem implies that $\norm{x_n^*}\rightarrow0$. This proves that $X^*$ has $\sigma$-order continuous norm, and hence order continuous norm in view of its order completeness. We now claim that $X^{**}$ is also KB. Assume, otherwise, $X^{**}$ is not KB. Then $X^*$ has a lattice copy of $\ell_1$; cf.~\cite[Theorem~4.69]{ali}. Without loss of generality, assume $\ell_1\subset X^*$. Let $(e_n)$ be the standard basis of $\ell_1$. Then $e_n\xrightarrow{uo}0$ in $\ell_1$, hence in $X^*$ by \cite[Lemma~4.5]{gx}. By Theorem~\ref{u2w}, $e_n\xrightarrow{w^*}0$ in $X^*$, since $X$ is order continuous. It follows from the positive Grothendick property of $X$ that $e_n\xrightarrow{w} 0$ in $X^*$, hence in $\ell_1$, which is absurd. This proves the claim. By \cite[Theorem~4.70]{ali}, $X^*$ is reflexive and hence so is $X$.
\end{proof}

The equivalence of \eqref{orpgi2} and \eqref{orpgi3} also follows from \cite[Theorem~5.3.13]{mn1991}. The preceding arguments demonstrate an interesting application of uo-convergence by providing an alternative proof of this equivalence.

\end{document}